\newtheorem*{theorem*}{Theorem}
\newtheorem{teo}{Theorem}[section]
\newtheorem{prop}[teo]{Proposition}
\newtheorem{example}[teo]{Example}
\newtheorem{cor}[teo]{Corollary}
\newtheorem{lem}[teo]{Lemma}
\newtheorem*{cor*}{Corollary}
\newtheorem*{lem*}{Lemma}
\newtheorem*{teorm}{Theorem 1}
\newtheorem*{teorA'}{Theorem A'}
\newtheorem*{fact*}{Fact}
\theoremstyle{definition}
\newcommand{\C}{\mathbb{C}}
\newcommand{\R}{\mathbb{R}}
\newcommand{\Pe}{\mathbb{P}}
\newcommand{\SC}{\mathscr{C}}
\newcommand{\F}{\mathcal{F}}
\newcommand{\G}{\mathcal{G}}
\newcommand{\LL}{{\mathcal L}}
\newcommand{\M}{\mathrm{M}}
\newcommand{\cl}[1]{\mbox{$\mathcal{#1}$}}
\newcommand{\codim}{{\rm codim}}
\newcommand{\IM}{\mathfrak{Im}}
\newcommand{\bb}{\mbox{\rm BB}}
\newcommand*\xbar[1]{ %
   \hbox{ %
     \vbox{%
       \hrule height 0.3pt 
       \kern0.35ex
       \hbox{%
         \kern-0.1em
         \ensuremath{#1}%
         \kern-0.1em
       }%
     }%
   }%
}
\newcommand*\xxbar[1]{%
   \hbox{%
     \vbox{%
       \hrule height 0.3pt 
       \kern0.4ex
       \hbox{%
         \kern-0.1em
         \ensuremath{#1}%
         \kern-0.1em
       }%
     }%
   }%
}
\DeclareMathOperator{\sing}{Sing}
\begin{document}

\title{Chow's theorem for real analytic Levi-flat hypersurfaces}

\author{Arturo Fern\'andez-P\'erez}
\address{Departamento de Matem\'atica ---  Universidade Federal de Minas Gerais}
\curraddr{Av. Ant\^onio Carlos 6627 --- 31270-901 --- Belo Horizonte, BRAZIL.}
\email{fernandez@ufmg.br}

\author{Rog\'erio   Mol}
\address{Departamento de Matem\'atica ---  Universidade Federal de Minas Gerais}
\curraddr{Av. Ant\^onio Carlos 6627 --- 31270-901 --- Belo Horizonte, BRAZIL.}
\email{rmol@ufmg.br}

\author{Rudy Rosas}
\address{Pontificia Universidad Cat\'olica del Per\'u}
\curraddr{Av. Universitaria 1801 --- Lima,
PERU.}
\email{rudy.rosas@pucp.edu.pe}

\subjclass[2010]{32V40, 32S65, 37F75}
\keywords{Holomorphic foliation, CR-manifold, Levi-flat variety}

\thanks{First and second   authors  partially financed by Pronex-Faperj. First author
supported by a CNPq grant  PQ2019-302790/2019-5. Third author supported by
 Vicerrectorado de investigaci\'on de la Pontificia Universidad Cat\'olica del Per\'u}

\begin{abstract} In this article we provide a version of Chow's theorem for real analytic Levi-flat hypersurfaces in the complex projective space $\Pe^{n}$, $n \geq 2$.
More specifically,
we prove that a real analytic Levi-flat hypersurface  $M \subset \Pe^{n}$, with singular set of real dimension at most $2n-4$ and whose Levi leaves are contained in algebraic hypersurfaces, is tangent to the levels of a rational function in $\Pe^{n}$. As a consequence,  $M$ is a semialgebraic set.
We also prove that a Levi foliation on $\Pe^{n}$ ---  a singular real analytic foliation whose leaves are immersed  complex manifolds of codimension one --- satisfying similar conditions ---
singular set of real dimension at most $2n-4$ and all leaves algebraic --- is defined by the level sets of a rational function.
\end{abstract}

\maketitle

\section{Introduction}
Chow's Theorem is an emblematic result in complex geometry, providing a   link between analytic geometry and algebraic geometry.
It asserts that every closed  analytic set in the complex projective space $\mathbb{P}^{n}$ is an algebraic set.
Originally proved in 1949 by W.-L. Chow \cite{chow1949}, in the following  years   it gained new proofs, based on different techniques, in 1953, by
R. Remmert and K. Stein  \cite{remmert1953} 
 and in 1956 by J.-P. Serre \cite{serre1956}.  
Chow's Theorem is complex in its essence and, in its full generality,   is not  extendable to the real framework.
There are many examples of closed real analytic sets in the complex projective space $\mathbb{P}^{n}$  that are not real algebraic (nor semialgebraic).

One attempt to find formulations similar to Chow's Theorem for  real analytic objects in the complex space $\Pe^{n}$ is to work with varieties having some sort of
``complex character''. This is an attribute, for instance, of  Levi-flat hypersurfaces.
We recall that a real analytic subvariety $M$ of real codimension one (a \emph{hypersurface}, in our terminology) in an ambient  complex manifold of dimension $n$ is \emph{Levi-flat} if its
regular part $M_{reg}$ --- the set of points  near which $M$ is a submanifold of real dimension $2n-1$ --- has a real analytic foliation $\LL$ by complex leaves of dimension $n-1$. This foliation is called \emph{Levi foliation} and its leaves are called \emph{Levi leaves}. In  Section \ref{section-levi-flat}  we give definitions and some properties of these objects.

Nevertheless, even in the Levi-flat context, a formulation such as Chow's Theorem fails to be true in general.
First, we call attention to the fact that if a Levi-flat hypersurface is real algebraic (or semialgebric), then all Levi leaves are algebraic, meaning that their closures are complex algebraic hypersurfaces. Indeed, they are contained in algebraic Segre varieties (\cite[Cor. 2.3]{jiri2012}; see also   Section \ref{section-levi-flat}).
Thus, asking $\LL$ to have algebraic leaves is a natural hypothesis in an attempt to obtain a Levi-flat type of Chow's Theorem.
However, this hypothesis alone is not sufficient: J. Lebl in \cite[Sec. 5]{lebl2015} gives an example of a real analytic singular Levi-flat hypersurface in $\mathbb{P}^{2}$ whose Levi leaves are algebraic   but is not
semialgebraic (we describe it in Example \ref{ex-lebl} below).
Second, it is a consequence of Darboux-Jouanolou's Theorem \cite[Th. 3.3]{jouanolou1979} that, if the Levi foliation $\LL$ contains infinitely many algebraic leaves and extends to a singular holomorphic foliation of codimension one $\F$ in the ambient space $\Pe^{n}$, then  $\F$ admits a rational first integral, i.e. its
leaves   are contained in the levels of a rational function. This is  the key tool for   the Levi-flat Chow's Theorem given in \cite[Th. 1.1]{jiri2012}, where, aside from asking infinitely many algebraic Levi leaves,
local  hypotheses are set which ultimately lead to the extension of $\LL$  to a singular holomorphic foliation on  $\Pe^{n}$.
Evidently, in the afore cited Lebl's example, such an extension is impossible.
With these comments in mind, we can state a Chow type theorem for Levi-flat hypersurfaces, which is the main result of this article:

\begin{teorm}
\label{teo-extension-ndim}
Let $M$ be a real analytic Levi-flat hypersurface  in
$\mathbb{P}^n$ such that $\sing(\xbar{M_{reg}})$ has real dimension at most $2n-4$.  Suppose that the Levi leaves
of $M$ are all algebraic. Then the Levi foliation extends to a singular holomorphic
 foliation on $\mathbb{P}^n$ with a rational first integral. As a consequence, $M$ is semialgebraic.
\end{teorm}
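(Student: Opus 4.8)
The plan is to split the proof into two stages. The substantive stage extends the Levi foliation $\LL$, a priori only defined on $M_{reg}$, to a singular holomorphic foliation $\F$ on all of $\Pe^n$; the second stage then reads off the rational first integral from Darboux--Jouanolou and deduces semialgebraicity. Both hypotheses enter the extension: the smallness of $\sing(\xbar{M_{reg}})$ governs the extension across the exceptional locus, while the algebraicity of the leaves drives the globalization to $\Pe^n$ and the application of Darboux--Jouanolou.

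First I would construct $\F$ locally and glue. Near a regular point $p\in M_{reg}$ one chooses holomorphic coordinates $(z_1,\dots,z_{n-1},w)$ in which $M=\{\im w=0\}$, so the Levi leaves are the traces on $M$ of the level sets of $w$; thus $\LL$ is, near $p$, the restriction to $M$ of the germ of holomorphic foliation $\F_p$ given by $dw=0$. To glue the $\F_p$ I would represent them by local holomorphic $1$-forms and use that $M_{reg}$ is a generic real-analytic hypersurface, hence a uniqueness set for holomorphic functions: if $\omega_1,\omega_2$ define the same distribution along $M_{reg}$, then $\omega_1\wedge\omega_2$ is a holomorphic $2$-form vanishing on $M_{reg}$, hence vanishing on a full neighborhood, so the two germs define the same foliation on the overlap of their domains. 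This produces a single holomorphic foliation $\F$ on a connected neighborhood $U$ of $M_{reg}$ in $\Pe^n$ whose leaves restrict to the Levi leaves.

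Next I would extend $\F$ across the exceptional set. The construction above fails only at points of $\xbar{M_{reg}}$ lying over $\sing(\xbar{M_{reg}})$, which by hypothesis is a closed set of real dimension at most $2n-4$, i.e.\ of real codimension at least $4$ in $\Pe^n$. Representing $\F$ by local holomorphic $1$-forms off this set, I would invoke the removable-singularity theorem for holomorphic functions valid across closed sets of real dimension $\le 2n-4$ (the Hartogs/Shiffman-type statement, in which no boundedness is required because the real codimension exceeds $2$) to extend the coefficients, and hence $\F$, to a full neighborhood $W$ of $\xbar{M_{reg}}$. I expect \emph{this} to be the main obstacle: one must verify that the locus where the normal-form construction degenerates is genuinely confined to a set of the claimed codimension---so the weak dimension bound, rather than the stronger local hypotheses of earlier work, suffices---and one must simultaneously control the singular set $\sing(\F)$ of the extended foliation so that the patched $1$-forms define a bona fide codimension-one foliation on $W$.

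Finally I would globalize and conclude. Each Levi leaf is algebraic, so its closure is an algebraic hypersurface of $\Pe^n$; these are leaves of $\F$ on $W$ and are cut out by global homogeneous data, which allows $\F$ to be propagated from $W$ to a singular holomorphic foliation on all of $\Pe^n$. Since $M_{reg}$ carries a real one-parameter family of Levi leaves, $\F$ has infinitely many distinct algebraic leaves, so by Darboux--Jouanolou \cite[Th.~3.3]{jouanolou1979} it admits a rational first integral $f$. The Levi leaves are then components of level sets of $f$; consequently $f(\xbar{M_{reg}})$ is contained in a real-analytic curve $\Gamma$ and $\xbar{M_{reg}}=f^{-1}(\Gamma)$ off the thin locus $\sing(\xbar{M_{reg}})$. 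As the preimage of a semialgebraic set under a rational map, $M$ is semialgebraic.
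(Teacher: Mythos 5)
Your first stage contains the decisive gap, at the step where you extend $\F$ across $\sing(\xbar{M_{reg}})$ by a Hartogs/Shiffman-type removable-singularity theorem. Those theorems extend objects defined on $B\setminus E$, with $B$ open and $E$ a closed set of small Hausdorff dimension, to all of $B$. But after your gluing step the foliation $\F$ is only defined on a neighborhood $U$ of $M_{reg}$, i.e., on a thin collar around the regular part of the hypersurface. For a point $p\in\sing(\xbar{M_{reg}})$ and a small ball $B$ around $p$, the set $B\setminus\sing(\xbar{M_{reg}})$ is mostly \emph{not} contained in $U$: it contains large open regions far from $M$ where $\F$ was never constructed. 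So you never have a foliation ``defined off a closed set of real codimension four'' in any open set, and the removable-singularity machinery simply does not apply; you have conflated that situation with ``defined near a hypersurface minus a small closed set.'' Extending the Levi foliation across $\sing(\xbar{M_{reg}})$ is exactly the hard point that forced the additional local hypotheses in \cite[Th.~1.1]{jiri2012} and that Cerveau--Lins Neto's theorem \cite{cerveau2011} takes as a \emph{hypothesis}; Brunella's Example~\ref{ex-brunella} shows the extension can genuinely fail (there the Levi foliation extends only as a $2$-web), so any extension argument must use the dimension hypothesis in an essential geometric way. You yourself flag this as ``the main obstacle'' but supply no argument, and in fact the paper never uses the dimension hypothesis for any removability statement.

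The paper sidesteps the extension problem entirely, reversing your order of operations: instead of extending $\LL$ and then invoking Darboux--Jouanolou, it manufactures the global foliation directly from the algebraic leaves. The closures of the Levi leaves are algebraic hypersurfaces contained in $\xbar{M_{reg}}$, and infinitely many share a common degree. In $\Pe^{2}$, where the hypothesis makes $\sing(\xbar{M_{reg}})$ finite, a B\'ezout intersection-number argument (Proposition~\ref{prop-infinite-collection}) forces infinitely many of these curves into a single pencil $\alpha F+\beta G$; for $n\geq 3$ the hypothesis $\dim_{\R}\sing(\xbar{M_{reg}})\leq 2n-4$ guarantees that generic two-dimensional linear sections meet $\sing(\xbar{M_{reg}})$ in isolated points, reducing to the planar case. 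The tangency set $T(\G,M)$ between the pencil foliation $\G$ and $M$ is real analytic and contains infinitely many leaves, hence has dimension $2n-1$, and irreducibility of $M$ yields $T(\G,M)=M$: the invariance of $M$ and the rational first integral $F/G$ appear simultaneously, with no local extension across singular points ever performed. A smaller final remark: your concluding sentence needs the curve $\Gamma\subset\Pe^{1}$ to be semialgebraic, and a real-analytic curve need not be, so the semialgebraicity of $M$ requires a further argument (cf.\ \cite[Th.~6.6]{jiri2012}) rather than the bare ``preimage of a semialgebraic set'' claim.
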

The theorem's statement makes reference to  the semianalytic set $\xbar{M_{reg}} \subset M$,   the closure taken in the usual topology.
Components of ``umbrella handle'' type may occur in $M$, so  this inclusion can be proper.
The  extension  of the Levi foliation to the ambient $\Pe^{n}$, in the theorem's conclusion,    is proved through purely algebraic arguments, using
B\'ezout's Theorem. It
 does not depend on additional hypothesis on the local geometry of $M$ in its singularities, as done in \cite[Th. 1.1]{jiri2012}.
The condition on the dimension of $\sing(\xbar{M_{reg}})$  seems to be reasonable, taking into account that   our ultimate  goal is to prove that $M$ is tangent to
a singular holomorphic foliation $\F$ in the ambient $\Pe^{n}$. In this case, $\dim_{\R} \sing(\xbar{M_{reg}}) \leq 2n - 4$ means, roughly
speaking, that the singularities of $\xbar{M_{reg}}$ comes from those of $\F$ (see Lemma \ref{lem-dim-sing} below).

The article has the following structure.
In Section \ref{section-levi-flat}, we present definitions and general properties concerning Levi-flat hypersurfaces and Segre varieties.
We also prove some  results of general character concerning the dimension of the singular set of a Levi-flat hypersurface invariant by an ambient singular holomorphic
foliation. For instance, a Levi-flat hypersurface in $\Pe^{n}$, for $n \geq 3$, tangent to a globally defined singular holomorphic foliation
$\F$, contains a component of complex codimension two of the singular set of $\F$ (Proposition \ref{prop-existence-codimtwo}).
We also prove results concerning Segre degenenerate points (Proposition \ref{prop-segredegenerate-codimtwo}) and ``umbrella handle'' type points (Propostion
\ref{prop-stick-handle}) of Levi-flat hypersurfaces invariant by ambient holomorphic foliations.
Next, in Section \ref{section-chow}, which   certainly is the core of this article, Theorem \ref{teo-extension-ndim} is proved.  The result is first proved for $\Pe^{2}$, as
a  direct consequence of the following fact: an infinite family of algebraic curves of the same
degree in $\Pe^{2}$, whose singularities  and pairwise intersection points  form a discrete set, contains infinitely many curves that lie in a pencil (Proposition \ref{prop-infinite-collection}).
Theorem \ref{teo-extension-ndim}, for an arbitrary $\Pe^{n}$ with $n \geq 3$,
is then obtained from the result in $\Pe^{2}$ by taking generic two dimensional complex linear sections.
The reading of Section \ref{section-chow} is independent of the other sections of this paper.
Finally, in Section \ref{section-Levi-flat-foliations}, we give an application of the method leading to   Theorem \ref{teo-extension-ndim} for the study of singular  Levi foliations on $\Pe^{n}$.
We say that a real analytic foliation of real codimension two on a complex manifold is a \emph{Levi foliation} if its leaves are immersed holomorphic manifolds
of complex codimension one. It is a \emph{singular Levi foliation} if it is  locally   defined by real analytic $1$-forms of type (1,0).
We close this article by proving, in  Theorem \ref{teo-levi-foliation}, that
 a  singular Levi foliation on $\Pe^{n}$, $n \geq 2$, must have   a rational first integral, provided it has algebraic leaves and
 singular set has real dimension at most $2n-4$.

\section{Singular Levi-flat hypersurfaces and Segre varieties}
\label{section-levi-flat}

The goal of this section is to present some  general facts  concerning the structure and  dimension of singular sets of   Levi-flat hypersurfaces invariant by singular  holomorphic foliations in the ambient complex manifold, in particular when this manifold is $\Pe^{n}$. Some of these facts  have already appeared in the literature of the field in a scattered manner. Our intention is to give them a more systematic presentation.

Let $X$ be a complex manifold of $\dim_{\C} X = n \geq 2$.
Recall that a singular holomorphic foliation of codimension one $\F$ on  $X$ is the object given by a (regular) holomorphic foliation of codimension one outside a
complex analytic subset $\sing(\F)$ --- its singular set --- of codimension at least two. Locally, $\F$ is defined, up to multiplication by   non-vanishing holomorphic functions,  by a holomorphic $1$-form, say $\omega$, with $\sing(\omega)$ of codimension at least two, satisfying
the integrability condition ($\omega \wedge d \omega = 0$). In this case,  $\sing(\F)$ coincides locally with $\sing(\omega)$.
We would refer the reader to the book \cite{linsneto2020} for a thorough treatment on singular holomorphic foliations, especially those ambiented
in complex projective spaces, which are object of our interest.
Hereafter, whenever we mention a
singular holomorphic foliation it will be implicit that its codimension is one.

Let $M \subset X$ be an irreducible real analytic variety of real codimension one, i.e. $\dim_{\R} M = 2n -1$.
Throughout this text we use the terminology \emph{real analytic hypersurface} when referring to such an $M$.
The \emph{regular part} of $M$ is the semianalytic set formed by the points $p \in M$ for which there exists a neighborhood $p \in U \subset X$ such that $M \cap U$ is a real analytic manifold
of real dimension $2n-1$. We denote this set by $M_{reg}$.
Let us consider $\xbar{M_{reg}}$, where the bar stands for the closure in the topology of $X$.
 We denote
$\sing(\xbar{M_{reg}}) = \xbar{M_{reg}} \setminus M_{reg}$.
If $p \in M_{reg}$, let $T_{p}M$ denote the tangent space of $M$ and $T_{p}X$, the complex tangent space of $X$, which also has a structure of an $\R$-vector space of dimension $2n$.
Consider the
 canonical inclusion of $\R$-vector spaces $T_{p}M \hookrightarrow T_{p}X$.
It is a standard fact from linear algebra that there is a unique complex subspace $\LL_{p} \subset T_{p}M$ such that
 $\dim_{\C} \LL_{p} = n-1$. The correspondence $p \in M_{reg} \to \LL_{p}$ defines a real analytic distribution  of complex hyperplanes. If this distribution is
 integrable in the sense of Frobenius, then it defines a real analytic foliation on $M_{reg}$, still denoted by $\LL$, whose leaves are immersed complex manifolds of  complex dimension $n-1$. We say in this case that $M$ is \emph{Levi-flat} and that $\LL$ is its \emph{Levi foliation}.
If $p \in M_{reg}$, take $z=(z_{1},\ldots,z_{n})$ holomorphic coordinates for $X$ in a neighborhood $U$ of $p$ and a real analytic function $\varphi = \varphi(z,\bar{z})$
  such that $M \cap U = \{ \varphi = 0\}$ and $\nabla \varphi (z) \neq 0$ for every $z$. Then the Hermitian quadratic form
  \[ L_{z}(v) = \sum_{1 \leq i,j \leq n} \frac{\partial^{2} \varphi}{\partial z_{i} \partial \bar{z}_{j}}(z,\bar{z}) v_{i} \bar{v}_{j}, \ \ v = (v_{1},\ldots,v_{n}) \in \C^{n} \]
is called \emph{Levi-form}. $M$ is Levi-flat on $U$ if and only if  $L_{z} \equiv 0$. Since $M$ is connected, this local
condition, around one fixed $p \in M_{reg}$, is enough to assure that the whole $M$ is Levi-flat.

A real analytic hypersurface $M \subset X$ is Levi-flat if and only if  Cartan's local normal form \cite{cartan1933}  applies:
  each point of $M_{reg}$ has a neighborhood in $X$ where there are holomorphic coordinates $z = (z_{1}, \ldots, z_{n})$ in which
$M_{reg}$ corresponds to $\IM(z_{n}) = 0$ and $\LL$ to the foliation whose leaves are $z_{n} = c$, where $c \in \R$. Given this local normal form, the holomorphic foliation with leaves
$z_{n}=c$, for $c \in \C$, extends $\LL$ to a neighborhood of $p \in M_{reg}$ in $X$. It is a trivial fact that this local extension is unique.  Therefore, by gluing
together  local extensions, we can
obtain  a holomorphic foliation, defined in a whole neighborhood of $M_{reg}$ in $X$, that extends $\LL$.
However, in general, for $p \in \sing(\xbar{M_{reg}})$,  the foliation $\LL$ does not extend to a singular holomorphic foliation in  any neighborhood of $p$
(see Example \ref{ex-brunella} below, where the Levi foliation extends as a 2-web in the ambient space).
 We should mention Cerveau-Lins Neto's Theorem
 \cite{cerveau2011}, which asserts that, if $\F$ is a singular holomorphic foliation  extending $\LL$ in a   neighborhood of $p \in \sing(\xbar{M_{reg}})$ in $X$, then there is a meromorphic
(possibly holomorphic) function
$F$ defined in a perhaps smaller neighborhood $U$ of $p$ such that the leaves of $\F$ in $U$ are contained in the level sets of $F$.  We say
that $F$ is a \emph{meromorphic}
(or  \emph{holomorphic}) \emph{first integral} for $\F$.

Actually, if we have an extension of $\LL$,  as a singular holomorphic foliation, around each point of $\sing(\xbar{M_{reg}})$,
the uniqueness of the local extensions gives
  a singular holomorphic foliation $\F$, defined in
  a   neighborhood of $\xbar{M_{reg}}$.   We say in this case that $M$ in \emph{invariant} by
  or  \emph{tangent} to $\F$.
 When $X = \Pe^{n}$, the connected components of the complementary of
$\xbar{M_{reg}}$ are Stein manifolds \cite{fernandez2017}. Thus, this foliation $\F$
  can be further extended   to a singular holomorphic foliation  in  the ambient  $\Pe^{n}$
 \cite[Lem. 2]{linsneto1999}.
In these circumstances,  we can take advantage of the fact that singular holomorphic foliations  in $\Pe^{n}$ are relatively well understood objects in order to obtain geometric information
on $M$ itself.

If $\F$ is a singular holomorphic foliation on a complex manifold $X$, then the singular set $\sing(\F)$ is a complex analytic variety
such that $\codim_{\C}\sing(\F) \geq 2$.
The following simple facts concern the dimension of the singular set of a real analytic Levi-flat hypersurface tangent to a singular holomorphic foliation:

\begin{lem}
\label{lem-dim-sing}
 Let $M$ be a real analytic Levi-flat hypersurface contained in a complex manifold $X$ of dimension $n$, tangent to a singular holomorphic foliation $\F$ in $X$.
Then, $\sing(\F) \cap \xbar{M_{reg}} \subset \sing(\xbar{M_{reg}})$.
If this inclusion is proper, then $\dim_{\R} \sing(\xbar{M_{reg}}) = 2n-2$ and the components of $\sing(\xbar{M_{reg}})$ of maximal  dimension are complex analytic and invariant by $\F$. Besides, $\sing(\xbar{M_{reg}})$ does not have components of real dimension $2n-3$.
\end{lem}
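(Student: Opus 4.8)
The plan is to reduce every assertion to the local structure of $\xbar{M_{reg}}$ at points where $\F$ is regular, where the problem becomes one about real analytic curves in $\C$.

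First I would establish the inclusion, which is equivalent to $M_{reg}\cap\sing(\F)=\emptyset$. This follows directly from Cartan's normal form: near any $p\in M_{reg}$ there are holomorphic coordinates $z=(z_1,\dots,z_n)$ in which $M_{reg}=\{\IM(z_n)=0\}$ and $\LL$ is $\{z_n=c\}$, $c\in\R$. Since $\F$ extends $\LL$ and the local extension is unique, $\F$ is the regular foliation $\{z_n=c\}$, $c\in\C$, near $p$, so $p\notin\sing(\F)$. Hence $\sing(\F)\cap\xbar{M_{reg}}\subset\xbar{M_{reg}}\setminus M_{reg}=\sing(\xbar{M_{reg}})$.

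Next I would set up the local model at an $\F$-regular point $q$. Straightening $\F$ to $\{z_n=c\}$, the hypothesis that $M$ is tangent to $\F$ makes $\xbar{M_{reg}}$ a union of fibers of $z_n$ near $q$; expanding a local defining function in the variables $z'=(z_1,\dots,z_{n-1})$ shows that the locus of fibers contained in $\xbar{M_{reg}}$ is cut out by a real analytic function of $(z_n,\bar z_n)$ alone. Thus $\xbar{M_{reg}}=z_n^{-1}(\gamma)$ for a real analytic set $\gamma\subset\C$, with $M_{reg}=z_n^{-1}(\gamma_{reg})$ and hence $\sing(\xbar{M_{reg}})=z_n^{-1}(\sing\gamma)$; since $z_n^{-1}(\gamma)$ has real dimension $2n-1$, the curve $\gamma$ has real dimension one, so $\sing\gamma$ is discrete. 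Consequently, near $q$ the set $\sing(\xbar{M_{reg}})$ is a finite union of fibers $\{z_n=c_i\}$, each complex analytic, $\F$-invariant and of real dimension $2n-2$. This local picture is the crux, and the delicate point is justifying that the set of $z_n$-values of full fibers lying in $\xbar{M_{reg}}$ is genuinely a real analytic curve (rather than merely subanalytic); I expect this to be the main obstacle.

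Finally I would assemble the global statements, using that $\codim_{\C}\sing(\F)\ge 2$, i.e.\ $\dim_{\R}\sing(\F)\le 2n-4$. Properness of the inclusion gives a point $p\in\sing(\xbar{M_{reg}})\setminus\sing(\F)$, and the local model at $p$ produces a $(2n-2)$-dimensional piece; since $\sing(\xbar{M_{reg}})$, being the non-manifold locus of the $(2n-1)$-dimensional set $\xbar{M_{reg}}$, always has real dimension at most $2n-2$, this forces $\dim_{\R}\sing(\xbar{M_{reg}})=2n-2$. A component $W$ of maximal dimension $2n-2$ cannot lie in $\sing(\F)$, so $W\setminus\sing(\F)$ is dense in $W$ and there, by the local model, $W$ is a complex manifold invariant by $\F$; by real analytic continuation the whole $W$ is then complex analytic and $\F$-invariant. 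Likewise, a putative component $V$ of dimension $2n-3$ cannot lie in $\sing(\F)$, so it has a dense set of $\F$-regular points; but near such points $\sing(\xbar{M_{reg}})$ is a union of full fibers and hence locally of pure dimension $2n-2$, contradicting that $V$ is a maximal $(2n-3)$-dimensional component. Therefore no such $V$ exists.
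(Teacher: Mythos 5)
Your proposal is correct and takes essentially the same route as the paper's (much terser) proof: Cartan's normal form for the inclusion $\sing(\F)\cap\xbar{M_{reg}}\subset\sing(\xbar{M_{reg}})$, and then holomorphic coordinates trivializing $\F$ at a point of $\sing(\xbar{M_{reg}})\setminus\sing(\F)$ to see that the leaf of $\F$ through such a point is contained in $\sing(\xbar{M_{reg}})$ --- your pullback model $z_n^{-1}(\gamma)$ is exactly this observation made explicit, and all three conclusions follow from it as you describe. The obstacle you flag is real but surmountable along the lines you indicate: expanding a defining function as $\sum_{\alpha,\beta}c_{\alpha\beta}(z_n,\bar z_n)\,z'^{\alpha}\bar z'^{\beta}$ exhibits the locus of saturated fibers as the common zero set of the real analytic functions $c_{\alpha\beta}$ (hence analytic, by Noetherianity of the local ring), and in any case only the local finiteness of the arc decomposition of $\gamma$ --- i.e.\ the discreteness of $\sing\gamma$, which holds even for a one-dimensional semianalytic slice --- is actually used.
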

\begin{proof}
The first assertion  follows from Cartan's normal form, which implies that a regular point of $M$ must be non-singular  for $\F$.
For the  assertions concerning the proper inclusion, if $p \in \sing(\xbar{M_{reg}}) \setminus \sing(\F)$, it is enough to take  holomorphic coordinates that trivialize
$\F$ around $p$ in order to see that the leaf of $\F$ passing through $p$ must be contained in $\sing(\xbar{M_{reg}})$.
\end{proof}

\begin{example}
{\rm
It is a simple task    to produce examples of Levi-flat hypersurfaces such that  $\dim_{\R}\sing(\xbar{M_{reg}}) = 2n-2$, where $n$ is the complex dimension of the ambient
space. For instance, take    a real analytic curve $S \subset \C$ with a singularity at $0 \in \C$ and consider $M = S \times \C^{n-1} \subset\C^{n}$.
Then $M$ is Levi-flat, tangent to the non-singular vertical foliation whose leaves are $\{z\} \times \C^{n-1}$, with $z \in \C$. In this case,
  $\sing(\xbar{M_{reg}})= \sing(M) = \{0\} \times \C^{n-1}$.

This construction can be carried out  in the complex projective space
 $\Pe^n$ by taking  $S \subset \C$    real algebraic, with a singularity at $0 \in \C$, defined by a polynomial equation $p(x,y)=0$, where $z=x+iy$.
Take  homogeneous coordinates $[z_0:z_1:\ldots:z_n]$  in $\Pe^n$,
 consider the affine hyperplane $\{z_0 \neq 0\} \cong \C^{n}$ and set
 $z=z_1/z_0$ as an affine coordinate. Define, as above,
  $M = S \times \C^{n-1} \subset \C^{n}$.
The Zariski closure of $M$ is
 a real algebraic Levi-flat hypersurface   in $\Pe^{n}$, still denoted by $M$,
whose equation can be obtained by
 bihomogenizing $p(x,y)$. This
 $M$ is   tangent to the holomorphic foliation $\F$ induced by $\omega=z_0 dz_1-z_1dz_0$, whose singular set is the $(n-2)$-dimensional projective plane
 $\{z_{0} = z_{1} = 0 \}$. On the other hand,  $\sing(\xbar{M_{reg}})$ has real dimension $2n-2$, since it contains $\{z_{1} = 0 \}$.
}\end{example}

If $\F$ is a singular holomorphic foliation on
 $\Pe^{n}$, where  $n \geq 3$, then $\sing(\F)$ contains irreducible components of complex codimension two \cite{jouanolou1979}.
Therefore, if $M \subset \Pe^{n}$ is real analytic Levi-flat hypersurface tangent to $\F$,  necessarily $\sing(\F) \cap \xbar{M_{reg}} \neq \emptyset$,
seeing that
 $\Pe^{n} \setminus \xbar{M_{reg}}$ has Stein connected components.
In particular, this implies that, for $n \geq 3$, there are no smooth real analytic Levi-flat hypersurfaces in $\Pe^{n}$ \cite{linsneto1999}.
Indeed, by the above discussion, the Levi foliation of a hypothetical   smooth Levi-flat hypersurface would extend to a singular holomorphic foliation
on $\Pe^{n}$ and, by Lemma \ref{lem-dim-sing}, this is incompatible with $\sing(\F) \cap \xbar{M_{reg}} \neq \emptyset$.

When $n \geq3$, we prove the following fact concerning the singular set of a Levi-flat hypersurface invariant by a globally
defined singular holomorphic foliation on   $\Pe^{n}$:

\begin{prop}
\label{prop-existence-codimtwo}
 Let $M \subset \Pe^{n}$ be a real analytic Levi-flat hypersurface, where $n \geq 3$, invariant by a singular holomorphic foliation $\F$ of codimension one
in the ambient $\Pe^{n}$.
Then there exists a component of complex codimension two of $\sing(\F)$ contained in  $\xbar{M_{reg}}$.
\end{prop}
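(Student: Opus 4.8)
The plan is to first produce a codimension-two component of $\sing(\F)$ that \emph{meets} $\xbar{M_{reg}}$, and then to upgrade ``meets'' to ``is contained in'' by a local-to-global argument based on first integrals.

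\emph{Finding a component that meets $\xbar{M_{reg}}$.} Since $n\ge 3$, the set $\sing(\F)$ has irreducible components of complex codimension exactly two \cite{jouanolou1979}; let $\Sigma$ denote their union, a nonempty compact complex analytic set of pure complex dimension $n-2\ge 1$. The connected components of $\Pe^n\setminus\xbar{M_{reg}}$ are Stein \cite{fernandez2017}, and a Stein manifold contains no compact analytic subvariety of positive dimension. Hence $\Sigma$ cannot be disjoint from $\xbar{M_{reg}}$, and I may pick $p\in\Sigma\cap\xbar{M_{reg}}$; by Lemma \ref{lem-dim-sing} we have $p\in\sing(\xbar{M_{reg}})$.

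\emph{The local model.} As $\F$ extends the Levi foliation around $p\in\sing(\xbar{M_{reg}})$, the theorem of Cerveau--Lins Neto \cite{cerveau2011} yields, on a neighborhood $U$ of $p$, a meromorphic first integral $F=g/h$ with $g,h$ relatively prime, whose fibers are the leaves of $\F$. The invariance of $M$ forces $M\cap U$ to be the union of the fibers whose value lies on a real-analytic curve $\gamma\subset\Pe^1$, so that $\xbar{M_{reg}}\cap U$ is the closure of this union. The decisive remark is that the indeterminacy locus $I=\{g=h=0\}$ is contained in the fiber $\{g=ch\}$ for every $c$, hence $I\subseteq\xbar{M_{reg}}$; and $I$, being of pure dimension $n-2$, is contained in a genuine codimension-two component $Z$ of $\sing(\F)$. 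In particular $Z\cap\xbar{M_{reg}}$ has complex dimension $n-2$, i.e.\ it is of full dimension in $Z$.

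\emph{Globalization, the main obstacle.} It remains to show $Z\subseteq\xbar{M_{reg}}$, which I would do by considering $W$, the interior of $Z\cap\xbar{M_{reg}}$ taken inside $Z$: it is open in $Z$ by definition and nonempty (it contains a smooth piece of $I$), so if I prove it is also closed, the irreducibility of $Z$ forces $W=Z$. For closedness, take $q$ in the closure of $W$; then $q\in Z\cap\xbar{M_{reg}}\subseteq\sing(\xbar{M_{reg}})$, so there is a local first integral $F_q$ at $q$. The delicate point is that an invariant codimension-two germ of $\sing(\F)$ can be of three types: contained in the indeterminacy of $F_q$, contained in a single fiber, or \emph{transverse} to the fibration (a singularity that travels with the fiber). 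In the first two cases the germ lies in $\xbar{M_{reg}}$ near $q$ --- in the second because $q\in\xbar{M_{reg}}$ and the continuity of $F_q$ off its indeterminacy force the relevant fiber value to lie on $\gamma$. The transverse case is the genuine difficulty: such a germ meets $\xbar{M_{reg}}$ only where the fiber value stays on $\gamma$, hence in real dimension $2n-5$. This, however, is incompatible with $q$ being a limit of points of $W$, each carrying a full-dimensional neighborhood inside $\xbar{M_{reg}}$; thus the branches of $Z$ through $q$ that are accumulated by $W$ are of the first two types and lie in $\xbar{M_{reg}}$, giving $q\in W$. Making this dimension count rigorous, together with the verification that $M\cap U=\{F\in\gamma\}$, is where the real work of the proof lies.
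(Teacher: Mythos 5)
Your opening step (the union $\Sigma$ of codimension-two components is compact of positive dimension, so it cannot avoid $\xbar{M_{reg}}$ since the complement has Stein components) is correct and appears verbatim as an ingredient of the paper's argument. The genuine gap is in your ``local model'': Cerveau--Lins Neto's theorem yields a meromorphic, \emph{possibly holomorphic}, first integral at $p$. Writing $F=g/h$ with $g,h$ coprime does not make $h$ a non-unit, so the indeterminacy locus $I=\{g=h=0\}$ on which your entire construction rests may well be empty --- and nothing in your argument rules this out at the point $p$ you chose; a priori $\F$ could admit a holomorphic first integral at \emph{every} point of $\sing(\xbar{M_{reg}})$, in which case there is no indeterminacy anywhere on $\xbar{M_{reg}}$ and your ``decisive remark'' has no content. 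Ruling out precisely this scenario is the bulk of the paper's proof: its Lemma \ref{lem-existence-meromorphic} shows that \emph{some} point of $\sing(\xbar{M_{reg}})$ carries a purely meromorphic first integral, by a global index computation. Assuming all local first integrals holomorphic, one takes a generic linear immersion $i\colon\Pe^{2}\hookrightarrow\Pe^{n}$ transversal to $\F$; the Baum--Bott indices of $i^{*}\F$ vanish at points coming from the locus where $\codim_{\C}\sing(\F)>2$ (Malgrange) and at tangency points ($\F$ regular there), while along each codimension-two component the index is generically constant and must vanish because the component meets $\xbar{M_{reg}}$ (Stein argument again) where the integral is holomorphic by assumption; Baum--Bott's theorem then gives the contradiction $\sum_{p}\bb_{p}(i^{*}\F)=(d+2)^{2}>0$. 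Your proposal contains no substitute for this lemma, and its ordering --- first fix a codimension-two component, then analyze locally at a point of it --- puts the cart before the horse, since the paper instead \emph{finds} the codimension-two set through the indeterminacy locus of the purely meromorphic integral (two Levi leaves in distinct levels accumulating at $p$, whose closures intersect in a pure codimension-two analytic set inside $\sing(\F)\cap\xbar{M_{reg}}$).

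Two secondary points. First, the verification you defer, $M\cap U=\{F\in\gamma\}$, and the identification of leaf closures with fibers are indeed needed in your route but are standard once pure meromorphy is in hand; the real missing idea is the Baum--Bott lemma, not these. Second, your ``globalization'' paragraph, which you yourself flag as unfinished, attacks a step the paper does not actually elaborate either: the paper's proof stops once a pure $(n-2)$-dimensional analytic germ of $\sing(\F)$ inside $\xbar{M_{reg}}$ is produced (and its later uses, e.g.\ in Proposition \ref{prop-segredegenerate-codimtwo}, work with these local components, assembled via Chow's theorem). So even if your open-closed argument on $Z$ were completed, the proposal would still fail at the earlier, essential point: it assumes nonempty indeterminacy, which is exactly what Lemma \ref{lem-existence-meromorphic} exists to prove.
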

Remark that, under the hypothesis of the proposition, Cerveau-Lins Neto's theorem applies, providing a meromorphic first integral for the ambient foliation
in a small neighborhood of each point of $\xbar{M_{reg}}$. Taking this into account,
the proof of the   proposition derives from the following Lemma:

\begin{lem}
\label{lem-existence-meromorphic}
In the same setting   of   Proposition \ref{prop-existence-codimtwo},
  there exists $p \in \sing(\xbar{M_{reg}})$ near which $\F$ has a purelly meromorphic first integral.
\end{lem}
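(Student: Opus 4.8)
The plan is to argue by contradiction, taking advantage of the remark preceding the statement: Cerveau--Lins Neto's theorem \cite{cerveau2011} already supplies a meromorphic first integral of $\F$ near every point of $\xbar{M_{reg}}$, so it suffices to rule out the possibility that this first integral can be taken \emph{holomorphic} at every such point. Assume, then, that near each point of $\xbar{M_{reg}}$ the foliation $\F$ admits a holomorphic first integral; I will show this forces $\F$ to carry a rational first integral whose base locus is incompatible with the holomorphy assumption.

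First I would check that under this assumption every Levi leaf is algebraic. Fix a Levi leaf $L$; since $L\subset M_{reg}\subset\xbar{M_{reg}}$ and $\xbar{M_{reg}}$ is closed, $\overline{L}\subset\xbar{M_{reg}}$ is compact. Covering $\overline{L}$ by finitely many charts on which $\F$ has a holomorphic first integral $F_i$, the connected leaf $L$ lies in a single fiber $\{F_i=c_i\}$ on each chart (the value being locally constant along $L$), and this fiber is a closed analytic hypersurface, so $\overline{L}$ is locally a union of its components. Hence $\overline{L}$ is a closed complex analytic hypersurface of $\Pe^{n}$, and therefore algebraic by Chow's theorem \cite{chow1949}. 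Since the Levi foliation has a one-parameter family of leaves, this yields infinitely many distinct algebraic hypersurfaces invariant by $\F$, and Darboux--Jouanolou's theorem \cite{jouanolou1979} then produces a rational first integral $R=P/Q\colon\Pe^{n}\dashrightarrow\Pe^{1}$ for $\F$.

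Next I would locate a base point of $R$ inside $\xbar{M_{reg}}$. After clearing common factors, the indeterminacy locus $I_R=\{P=Q=0\}$ is a nonempty compact complex analytic set of complex codimension two (here $n\geq 3$ is used) contained in $\sing(\F)$. Because the connected components of $\Pe^{n}\setminus\xbar{M_{reg}}$ are Stein \cite{fernandez2017}, and thus contain no compact analytic set of positive dimension, $I_R$ must meet $\xbar{M_{reg}}$; choose $p\in I_R\cap\xbar{M_{reg}}$. By assumption $\F$ has a holomorphic first integral $F$ near $p$, which we may take primitive (connected generic fibers), so every first integral of $\F$ near $p$ factors as $\psi\circ F$ with $\psi$ a one-variable meromorphic function, i.e. a holomorphic map $\psi\colon(\C,F(p))\to\Pe^{1}$. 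Consequently $R=\psi\circ F$ is a genuine holomorphic map to $\Pe^{1}$ near $p$, hence has no indeterminacy there, contradicting $p\in I_R$.

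This contradiction shows that the first integral of $\F$ is \emph{purely} meromorphic at some point $p\in\xbar{M_{reg}}$; then $p$ lies in the indeterminacy locus of that first integral, so $p\in\sing(\F)\cap\xbar{M_{reg}}\subset\sing(\xbar{M_{reg}})$ by Lemma \ref{lem-dim-sing}, which is exactly the assertion. The step I expect to be most delicate is the first one---showing that local holomorphic first integrals really force the leaf closures to be honest closed analytic (hence algebraic) sets rather than locally analytic pieces that could accumulate pathologically---together with the clean factorization $R=\psi\circ F$ at the singular point $p$, since it is this factorization that turns ``holomorphic first integral'' into ``no base point'' and drives the contradiction.
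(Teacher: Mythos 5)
Your argument breaks at its first step, and the break is fatal to the whole strategy. You claim that, once $\F$ has a holomorphic first integral near every point of $\xbar{M_{reg}}$, each Levi leaf $L$ ``lies in a single fiber $\{F_i=c_i\}$ on each chart (the value being locally constant along $L$)''. This is false as stated: $F_i$ is defined only on the chart $U_i$, and $L\cap U_i$ is in general a countable union of plaques; local constancy of $F_i$ along the foliation forces $F_i$ to be constant on each \emph{plaque}, not on $L\cap U_i$, since a path in $L$ joining two plaques leaves $U_i$, where $F_i$ no longer exists. A leaf may therefore re-enter a chart along infinitely many distinct fibers, plaques may accumulate on one another, and the finite-cover compactness argument does not make $\xbar{L}$ an analytic set. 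The phenomenon is not hypothetical: a linear foliation on a complex torus is everywhere \emph{regular} --- so it admits a holomorphic first integral near every point --- and yet it can leave invariant a real analytic Levi-flat hypersurface all of whose Levi leaves are dense. So ``local holomorphic first integrals along $\xbar{M_{reg}}$'' cannot, by itself, yield closed leaves; any repair must inject global information about $\Pe^{n}$. Note also that the lemma lives inside Proposition \ref{prop-existence-codimtwo}, whose setting makes \emph{no} assumption that the Levi leaves are algebraic (that hypothesis belongs to Theorem 1 and is not available here), so you cannot import algebraicity either. Without infinitely many algebraic invariant hypersurfaces, Darboux--Jouanolou \cite{jouanolou1979} never gets off the ground and the contradiction you aim for evaporates.

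The second half of your proposal is essentially sound and does overlap with the paper: the Steinness of the components of $\Pe^{n}\setminus \xbar{M_{reg}}$ \cite{fernandez2017} forcing a compact analytic set of dimension $n-2\geq 1$ to meet $\xbar{M_{reg}}$ (this is where $n\geq 3$ genuinely enters, as you observe), and the factorization $R=\psi\circ F$ through a primitive holomorphic first integral at $p$ (Mattei--Moussu type), which indeed contradicts genuine indeterminacy at $p$. But the paper's proof derives its contradiction by an index computation that bypasses leaf closures entirely: it takes a generic linear immersion $i:\Pe^{2}\hookrightarrow\Pe^{n}$ transverse to $\F$ and shows that \emph{every} singularity of $i^{*}\F$ would have vanishing Baum--Bott index --- using Malgrange's theorem \cite{malgrange1976} at points where $\codim_{\C}\sing(\F)>2$, triviality at tangency points of $i(\Pe^{2})$ with the regular part of $\F$, and, for codimension-two components $\Gamma$ of $\sing(\F)$, the constancy of the transversal Baum--Bott index along $\Gamma$ \cite[Th. 3.3]{cerveau2013} combined with the same Steinness argument to see that $\Gamma$ meets $\xbar{M_{reg}}$, where the assumed holomorphic first integral kills the index --- contradicting Baum--Bott's theorem, $\sum_{p}\bb_{p}(i^{*}\F)=(d+2)^{2}>0$. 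If you wish to keep your global Darboux--Jouanolou route, you must first \emph{prove} that the contradiction hypothesis forces leaf closures to be analytic, and that claim is essentially as deep as the lemma itself.
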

\begin{proof}
We suppose, by contradiction, that near each  point of $\sing(\xbar{M_{reg}})$, the foliation $\F$ has a holomorphic first integral.
Suppose that $\F$ is induced, on the complex cone $\C^{n+1} \setminus \{0\}$ of $\Pe^{n}$, by an integrable polynomial $1$-form $\omega$ with homogeneous coefficients
 such that $\codim_{\C} \sing(\omega) = 2$.
A linear immersion $i: \Pe^{2} \hookrightarrow \Pe^{n}$ is said to be tranversal to $\F$ if
$ \sing(i^{*} \omega)$ is discrete,  where   $i$ still denotes  its lifting to the complex cones of $\Pe^{2}$ and $\Pe^{n}$. In this case, we denote by $i^{*} \F$
the foliation  induced by $i^{*} \omega$ in   $\Pe^{2}$.
  We note that linear immersions transversal to foliations are generic in the corresponding Grassmannian \cite{mattei1980}.
  The singular set $\sing(i^{*}\F)$   has three kinds of points:
\begin{enumerate}
\item points of intersection of $i(\Pe^{2})$ with  points of $\sing(\F)$ near which
$\codim_{\C} \sing(\F) > 2$;
\item points of intersection of $i(\Pe^{2})$ with the   components of
codimension two of $\sing(\F)$;
\item points of tangency of $i(\Pe^{2})$
 and the regular part of $\F$.
 \end{enumerate}
 The first set can be avoided by taking a generic immersion.
Anyway, near each such point, $\F$ has a holomorphic first integral by Malgrange's theorem \cite{malgrange1976}, and so does $i^{*}\F$.
Near each point in the third set, $\F$ is regular and so admits a holomorphic first integral, the same holding for $i^{*}\F$.
Thus,  for $p \in \sing(i^{*}\F)$ fitting these two cases,    $\bb_{p}(i^{*}\F) = 0$, where
$\bb$ stands for the Baum-Bott index. For the definition and properties of the Baum-Bott index used here we refer the reader to \cite[Sec. 3.1]{cerveau2013}.
By \cite[Th. 3.3]{cerveau2013}, if $\Gamma$ is a component of complex codimension two of $\sing(\F)$, then, for $p \in \Gamma$ outside a proper analytic set $\Gamma_{1} \subset \Gamma$,
the Baum-Bott index $\bb_{0}(j^{*}\F)$, defined by means of a local transversal immersion $j:(\C^{2},0) \to (\Pe^n,p)$,  does not depend on $p$ and on $j$.
Note that, using again that  $\Pe^{n} \setminus \xbar{M_{reg}}$ has Stein connected components, $\Gamma$ intersects $\xbar{M_{reg}}$. Since $\F$ admits a holomorphic
first integral near each point of $\xbar{M_{reg}}$,
we conclude that these indices must vanish. Thus, for $i: \Pe^{2} \to \Pe^{n}$ sufficiently generic, $\bb_{p}(i^{*}\F) = 0$ also  for points $p$ of the second of
the above types. Summarizing, if $i$ is sufficiently generic, we have that $\bb_{p}(i^{*}\F) = 0$ for every $p \in \sing(i^{*}\F)$. However,
if $d \geq 0$ is the degree of $i^{*}\F$ (i.e. the number of tangencies of $i^{*}\F$ with a generic line of $\Pe^{2}$, counted with multiplicities),
we  must have, by Baum-Bott's theorem (\cite{baum1972} and also \cite[Th. 3.1]{cerveau2013}), that
\[\sum_{p \in \sing(i^{*}\F)} \bb_{p}(i^{*}\F) = (d+2)^{2} >0 .\]
This gives a contradiction with the vanishing of all Baum-Bott indices.
\end{proof}

We can now prove the proposition:

\begin{proof}[Proof {\rm(of Proposition \ref{prop-existence-codimtwo})}]
 By Lemma \ref{lem-existence-meromorphic}, there exists $p \in \xbar{M_{reg}}$ around which $\F$ has a purely meromorphic first integral, say $F$.
Thus, we can find   two    leaves   of the Levi foliation  $\LL$, say $L_{1}$ and  $L_{2}$, contained in different levels of $F$,  accumulating to $p$. The intersection
of their closures   is an analytic set of complex codimension two, contained in the indeterminacy set of $F$,
also contained in $\sing(\F)$ and in $\xbar{M_{reg}}$. This proves the proposition.
  \end{proof}

We record  the following evident consequence of Lemma \ref{lem-dim-sing} and of Proposition \ref{prop-existence-codimtwo}:
\begin{cor}
\label{cor-singular-dim}
If $M \subset \Pe^{n}$ is a real analytic Levi-flat hypersurface, with $n \geq 3$, invariant by a singular holomorphic foliation   of codimension one
in   $\Pe^{n}$, then either
$\dim_{\R} \sing(\xbar{M_{reg}}) = 2n -4$ or $\dim_{\R} \sing(\xbar{M_{reg}}) = 2n -2$.
\end{cor}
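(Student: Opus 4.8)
The plan is to combine the two preceding results directly, with no further geometric input required. First I would invoke Proposition \ref{prop-existence-codimtwo} to produce a component $\Gamma \subset \sing(\F)$ of complex codimension two contained in $\xbar{M_{reg}}$; such a $\Gamma$ has real dimension $2n-4$. Then Lemma \ref{lem-dim-sing} supplies the inclusion $\sing(\F) \cap \xbar{M_{reg}} \subset \sing(\xbar{M_{reg}})$, so that $\Gamma \subset \sing(\xbar{M_{reg}})$. This already yields the lower bound $\dim_{\R} \sing(\xbar{M_{reg}}) \geq 2n-4$.

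The remaining step is a dichotomy governed by whether the inclusion $\sing(\F) \cap \xbar{M_{reg}} \subset \sing(\xbar{M_{reg}})$ of Lemma \ref{lem-dim-sing} is an equality or is proper. If it is an equality, then $\sing(\xbar{M_{reg}}) = \sing(\F) \cap \xbar{M_{reg}}$ sits inside $\sing(\F)$, which has $\codim_{\C} \geq 2$ and hence real dimension at most $2n-4$; combined with the lower bound from $\Gamma$, this forces $\dim_{\R} \sing(\xbar{M_{reg}}) = 2n-4$. If instead the inclusion is proper, Lemma \ref{lem-dim-sing} asserts outright that $\dim_{\R} \sing(\xbar{M_{reg}}) = 2n-2$. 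In either case the dimension lands in $\{2n-4,\, 2n-2\}$, which is precisely the claim.

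There is essentially no obstacle at the level of this corollary itself: the substantive work has already been carried out in Proposition \ref{prop-existence-codimtwo} (the Baum--Bott argument guaranteeing a codimension-two component of $\sing(\F)$ meeting $\xbar{M_{reg}}$) and in Lemma \ref{lem-dim-sing} (the proper-inclusion dichotomy, together with the exclusion of components of real dimension $2n-3$). The only point I would state with some care is that the lower bound $2n-4$ coming from $\Gamma$ exactly matches the upper bound $2n-4$ available in the equality case, so that case yields a genuine equality rather than a mere inequality; this is what rules out any intermediate value and, combined with the lemma, confines $\dim_{\R} \sing(\xbar{M_{reg}})$ to the two listed possibilities.
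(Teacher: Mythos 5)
Your proof is correct and follows exactly the route the paper intends: the paper records this corollary as an ``evident consequence'' of Lemma \ref{lem-dim-sing} and Proposition \ref{prop-existence-codimtwo} without spelling out details, and your argument --- the lower bound $2n-4$ from the codimension-two component $\Gamma$, plus the equality/proper dichotomy for the inclusion $\sing(\F) \cap \xbar{M_{reg}} \subset \sing(\xbar{M_{reg}})$ --- is precisely that intended combination. Your care in noting that the equality case pins the dimension to exactly $2n-4$ (upper bound from $\codim_{\C}\sing(\F) \geq 2$ meeting the lower bound from $\Gamma$) is a correct and worthwhile filling-in of what the paper leaves implicit.
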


In the case $n =2$, the validity of Corollary \ref{cor-singular-dim} would imply that there are no smooth real analytic Levi-flat
hypersurfaces in $\Pe^{2}$.
Such an object, if it existed, would be an exceptional minimal set for the singular holomorphic foliation obtained by the extension of its Levi foliation to the ambient $\Pe^{2}$.
We recall that an \emph{exceptional minimal set} for a singular holomorphic foliation of codimension one $\F$ in a complex manifold $X$ is a non-empty compact set $\mathcal{M} \subset X$ such that
$\mathcal{M} \cap \sing(\F) = \emptyset$, which is
invariant by $\F$ (i.e. if a leaf $L$ of $\F$ intersects $\mathcal{M}$ then $L \subset \mathcal{M}$) and is minimal with respect to these properties (see \cite{camacho1988}).
Whether or not there exists a foliation on $\Pe^{2}$ admitting  an exceptional minimal set is one of the most intriguing problems in the theory of holomorphic foliations.
We should mention   that, for $n \geq 3$, there are no   exceptional minimal sets  for singular holomorphic foliations in $\Pe^{n}$
\cite{linsneto1999}.

Let $M$ be an irreducible germ of real analytic hypersurface at $0 \in \C^{n}$. Suppose that  $\varphi:U \to \R$ is a real analytic function,
which is a defining function for $M$,
where
$U \subset \C^{n}$ is a neighborhood of the origin, small enough so that
$\varphi = \varphi(z,\bar{z})$ admits a Taylor series development in $U$, say
$\varphi(z,\bar{z}) = \sum_{\mu,\nu \geq 0} a_{\mu \nu}z^{\mu} \bar{z}^{\nu}$.
Note that, since   $\varphi$ is real, $a_{\mu \nu} = \bar{a}_{\nu \mu}$ for all pair of coefficients $\mu,\nu$.
In a more precise way,
considering  $U^{*} = \{z \in U;  \bar{z}= (\bar{z}_{1}, \ldots, \bar{z}_{n}) \in U\}$ and
setting  $w = \bar{z}$ as a new complex variable,
  the series
$\varphi_{\C}(z,w) = \sum_{\mu,\nu \geq 0} a_{\mu \nu}z^{\mu} w^{\nu}$ converges in $U \times U^{*} \subset \C^{n} \times \C^{n}$, defining a
holomorphic map $\varphi_{\C}= \varphi_{\C}(z,w)$,   the \emph{complexification} of $\varphi$. We then define the \emph{complexification} of $M$ as the complex hypersurface $M_{\C}$ in
$U \times U^{*}$ with equation $\varphi_{\C} = 0$.
By possibly reducing $U$, the defining function $\varphi$ can be obtained in such a way that
$\varphi_{\C}$ is a \emph{minimal} defining function for $M_{\C}$ (see \cite[Lem. 2.1]{Pinchuk_2017}). This means that if $W \subset U \times U^{*}$ is an open set and $f$
is a holomorphic function such that $f \equiv 0$ on $M_{\C} \cap W$, then there exists a holomorphic function $h$ on $W$ satisfying
  $f = h \varphi_{\C}$ on $W$. We also call $\varphi$ a \emph{minimal defining function} for $M$.

Suppose now that $M \subset X$ is a real analytic Levi-flat hypersurface in a complex manifold $X$ of $\dim_{\C}X=n$.
If $p \in M$,   we can choose a coordinate neighborhood $U \subset X$ around $p$,
with coordinates $z = (z_{1},\ldots,z_{n})$,
 and,  identifying each $q \in U$ with its $z$-coordinate,
a minimal defining function  $\varphi:U \to \R$    for
 $M \cap U$.
For $q \in M \cap U$, we call the complex analytic variety
$\Sigma_{q}^{U} = \{z \in U; \varphi(z,\bar{q}) = 0 \}$  the \emph{Segre variety} of $M$ at $q$. At this point, we refer the reader to \cite{Pinchuk_2017}, and also to
\cite{lebl2013,fernandezlebl2015,shafikov2015}, for details on the construction and properties of Segre varieties.
In principle, the Segre variety at $q$ depends on the neighborhood $U$ and on the defining function $\varphi$. However, its germ at $q$
is intrinsically defined.
It is evident that $q \in \Sigma_{q}^{U}$.
Thus,  $\Sigma_{q}^{U}$ is either a complex analytic hypersurface or, when $\varphi(z,\bar{q}) \equiv 0$, the whole $U$.
In the latter case, we say that $q$ is a \emph{Segre degenerate} point of $M$.
In the non-degenerate case, all components of $\Sigma_{q}^{U}$  contained in $M$ are invariant by the Levi-folition $\LL$.
In particular, if $q \in \M_{reg} \cap U$, then $\Sigma_{q}^{U}$ has a unique local component in $M$, which coincides with the Levi leaf through $q$.
For our purposes, this is the most relevant property of Segre varieties.
As a consequence,   all Levi leaves are closed in $M_{reg} \cap U$ (their closures in $U$ are complex hypersurfaces).
Since $\varphi$ is a real function, it is easy to see that, for $p,q \in M \cap U$,
$q \in \Sigma_{p}^{U}$ if and only if $p \in \Sigma_{q}^{U}$. This means, for instance, that if $p$ is Segre degenerate, then
$p \in \Sigma_{q}^{U}$ for every $q \in M_{reg} \cap U$, which means that $p$ belongs to the closure of every Levi leaf in $U$.
In particular, a Segre degenerate point is in $\sing(\xbar{M_{reg}})$.
In fact, it is   a \emph{dicritical singularity} of $M$, meaning that it lies in the closure
of infinitely many leaves of $\LL$.
It turns out that dicritical singularities are also Segre degenerate, as proved in \cite[Th. 3.1]{Pinchuk_2017}.

Segre degenerate points are contained in a complex analytic variety in $X$
of complex dimension at most $n-2$ (see, for instance
\cite[Prop. 3]{lebl2013}).
The following result is a consequence Proposition \ref{prop-existence-codimtwo}:

\begin{prop}
\label{prop-segredegenerate-codimtwo}
 Let $M \subset \Pe^{n}$ be a real analytic Levi-flat hypersurface, where $n \geq 3$, invariant by a singular holomorphic foliation $\F$ of codimension one
in  $\Pe^{n}$.
Then the set of  Segre degenerate points of $M$ form a complex algebraic variety, contained in $\sing(\xbar{M_{reg}})$,  of pure complex dimension $n-2$.
\end{prop}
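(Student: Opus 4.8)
The plan is to identify the set $D$ of Segre degenerate points of $M$ with a union of codimension-two irreducible components of $\sing(\F)$, from which algebraicity and pure dimension follow at once. Two facts are already available: every Segre degenerate point lies in $\sing(\xbar{M_{reg}})$, and $D$ is contained in a complex analytic variety of complex dimension at most $n-2$ (by \cite[Prop. 3]{lebl2013}). It therefore suffices to show that, locally at each of its points, $D$ is a complex analytic variety of pure codimension two contained in $\sing(\F)$, and then to propagate this description along the irreducible components of $\sing(\F)$.

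For the local analysis, fix $p \in D$. By \cite[Th. 3.1]{Pinchuk_2017}, being Segre degenerate is equivalent to being dicritical, so infinitely many Levi leaves accumulate at $p$. Since $M$ is invariant by $\F$, Cerveau--Lins Neto's theorem \cite{cerveau2011} provides a meromorphic first integral $F = f/g$ for $\F$ on a neighborhood of $p$, with $f,g$ holomorphic and relatively prime. As infinitely many leaves accumulate at $p$, this first integral cannot be holomorphic, i.e. it is purely meromorphic, and its indeterminacy locus $I = \{f = g = 0\}$ has pure codimension two and is contained in $\sing(\F)$. I then observe that every fiber $\{f - c g = 0\}$ contains $I$; hence every point of $I$ lies in the closure of infinitely many level sets of $F$, that is, of infinitely many Levi leaves. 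Consequently $I \subset \xbar{M_{reg}}$ and each point of $I$ is itself Segre degenerate, while conversely no point off $I$ near $p$ can be dicritical. Thus $D$ coincides with $I$ near $p$, which shows that $D$ has pure complex dimension $n-2$ and that $D \subset \sing(\F)$.

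To globalize, I would argue that $D$ is exactly the union of those codimension-two irreducible components $\Gamma$ of $\sing(\F)$ that meet $D$. Indeed, the local description shows that $D \cap \Gamma$ is open in $\Gamma$; since $D$ is closed and $\Gamma$ is irreducible, hence connected, any component $\Gamma$ meeting $D$ satisfies $\Gamma \subset D$. Conversely, the local analysis places every point of $D$ on a codimension-two component of $\sing(\F)$. Because $\F$ is a singular holomorphic foliation on $\Pe^{n}$, its singular set $\sing(\F)$ is a complex algebraic variety with finitely many irreducible components; hence $D$ is a finite union of codimension-two algebraic varieties, and therefore a complex algebraic variety of pure complex dimension $n-2$. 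It is nonempty by Proposition \ref{prop-existence-codimtwo}, whose proof already exhibits a codimension-two component of $\sing(\F)$ in $\xbar{M_{reg}}$ along which $\F$ has a purely meromorphic first integral, hence a component of $D$.

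The main obstacle lies in the local step: one must verify carefully that the \emph{entire} indeterminacy locus $I$ of the purely meromorphic first integral is swept out by accumulations of Levi leaves, so that all of $I$, and not merely $p$, is Segre degenerate and contained in $\xbar{M_{reg}}$. Once this is secured, the open--closed propagation along the irreducible components of $\sing(\F)$ is routine, and both algebraicity and purity of dimension follow from the algebraicity of $\sing(\F)$.
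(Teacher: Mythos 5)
Your architecture is recognizably the same as the paper's --- Segre degenerate $=$ dicritical via \cite[Th. 3.1]{Pinchuk_2017}, Cerveau--Lins Neto first integrals, Lebl's bound $\dim \leq n-2$, nonemptiness from Proposition \ref{prop-existence-codimtwo} --- but the key local step has a genuine gap, exactly at the point you flagged and then tried to dispatch with the fiber argument. From ``every fiber $\{f-cg=0\}$ contains $I$'' you infer that every point of $I$ lies in the closure of infinitely many \emph{Levi leaves}, hence that $I \subset \xbar{M_{reg}}$ and every point of $I$ is Segre degenerate. This conflates fibers of the first integral with Levi leaves. A Levi leaf is a leaf of $\LL$ inside $M_{reg}$; its closure is one (or a union of) irreducible component(s) of a single fiber, not the whole fiber, and only fibers over the (uncountable but typically thin) set of values actually realized by $M$ carry Levi leaves at all. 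A given point $q \in I$ near $p$ lies on every fiber, but there is no a priori reason it lies on the particular components of those fibers that sit inside $M$; so neither $I \subset \xbar{M_{reg}}$ nor the dicriticality of all of $I$ follows. The paper is careful never to make this claim: it only considers the indeterminacy points \emph{within} $\sing(\xbar{M_{reg}})$, and it manufactures the codimension-two germs as $\xbar{L_1} \cap \xbar{L_2}$, the intersection of the closures of two Levi leaves in different levels --- a set that is automatically of pure codimension two, contained in $I$, in $\sing(\F)$, and (being an intersection of closures of subsets of $M_{reg}$) in $\xbar{M_{reg}}$. A repair of your step would require something extra, e.g.\ choosing a primitive first integral and showing that the fibers through the relevant leaves are generically irreducible, which you do not argue.

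Two secondary points. First, your globalization differs from the paper's and would be a nice alternative if the local step held: the paper assembles the local codimension-two components into an analytic set and invokes the classical Chow theorem for algebraicity, whereas you deduce that $D$ is a union of codimension-two irreducible components of the algebraic set $\sing(\F)$ by an open--closed argument. That argument itself needs care: near $p$, $D$ equals a union of \emph{some} local branches of $\sing(\F)$, and an irreducible global component $\Gamma$ may have several local branches at $p$, not all in $D$, so $D\cap\Gamma$ need not be open in $\Gamma$ at such points; the standard fix is to run connectedness on the regular part of $\Gamma$ minus the remaining components. Second, you never use Malgrange's theorem, which is how the paper establishes purity of dimension: at points where the local dimension of $\sing(\F)$ is less than $n-2$, the foliation admits a holomorphic first integral, hence such points are non-dicritical and cannot be Segre degenerate. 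In your scheme this is subsumed by the local identity $D=I$, which is precisely the unproven claim; until that step is secured, the proposal is not a complete proof.
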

\begin{proof} Segre degenerate points are dicritical. Thus,  their set coincide with
subset  of $\sing(\xbar{M_{reg}})$ formed by the indeterminacy points of
  the local meromorphic first integrals of  $\F$. By the proof of Proposition \ref{prop-existence-codimtwo}, this latter  set has local components of
dimension $n-2$ contained in $\sing(\xbar{M_{reg}})$. The complex analytic variety formed by assembling together these components is algebraic, as a consequence of
the authentic  Chow's Theorem.
Finally,   by Malgrange's theorem \cite{malgrange1976}, at a point $p \in \sing(\F)$ where the local dimension of $\sing(\F)$ is  less than
$n-2$, $\F$ admits a holomorphic first integral. If such a  $p$ lies in $\xbar{M_{reg}}$, then $p$ is non-dicritical and, thus, cannot be Segre degenerate
by \cite[Th. 3.1]{Pinchuk_2017}.
\end{proof}

Up to now, we have considered  properties of  $\xbar{M_{reg}}$, where $M$ is a real analytic Levi-flat hypersurface.
We finish this section by examining
the  ``handle stick'' points of $M$, that is to say, the points in $M \setminus \xbar{M_{reg}}$. When $M$ is tangent to an ambient singular holomorphic foliation
it turns out that $M \setminus \xbar{M_{reg}}$ is also invariant by the foliation.

\begin{prop}
\label{prop-stick-handle}
 Let $M$ be a real analytic Levi-flat hypersurface contained in a complex manifold $X$ of dimension $n$, tangent to a singular holomorphic foliation $\F$ in $X$.
Then, $M \setminus \xbar{M_{reg}}$ is invariant by $\F$.
\end{prop}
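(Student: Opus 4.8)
The plan is to work locally near a point $p\in M\setminus\xbar{M_{reg}}$ that is a regular point of $\F$, and to prove that the whole local leaf (plaque) of $\F$ through $p$ is contained in $M$; since the singular points of $\F$ place no restriction on invariance, this is enough. Fix a minimal real analytic defining function $\varphi$ for $M$ near $p$, with complexification $\varphi_\C$ as in the minimal-defining-function setup recalled above, and write $\partial\varphi=\sum_i(\partial\varphi/\partial z_i)\,dz_i$ for the $(1,0)$-part. On $M_{reg}$, invariance of $M$ by $\F$ means that the Levi distribution $\LL_q=\ker\partial\varphi_q$ coincides with $\ker\omega_q$, where $\omega$ is a local holomorphic $1$-form defining $\F$; hence $\partial\varphi$ and $\omega$ are proportional along $M_{reg}$, i.e. the $2$-form $\partial\varphi\wedge\omega$ vanishes identically on $M_{reg}$. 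The difficulty is that $p$ is a handle point, so $M_{reg}$ does not accumulate at $p$ and this vanishing cannot be read off locally at $p$: it must be propagated across the gap, and this propagation is what I expect to be the main obstacle.

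First I would upgrade the vanishing on $M_{reg}$ to a divisibility relation valid near every point of $M$. Complexifying, the coefficients of $\partial_z\varphi_\C\wedge\omega$ (derivative taken in $z$ only) are holomorphic on $U\times U^*$ and vanish on the locus $\{(z,\bar z):z\in M_{reg}\}$, which is a maximally real real analytic subset of $M_\C$ and hence a uniqueness set for holomorphic functions on $M_\C$. Since $M$ is irreducible, so is its complexification $M_\C$, and the identity principle forces $\partial_z\varphi_\C\wedge\omega$ to vanish on all of $M_\C$, in particular near the point $(p,\bar p)$, even though the locus above does not reach it. By minimality of $\varphi_\C$, vanishing on $M_\C$ gives $\partial_z\varphi_\C\wedge\omega\in(\varphi_\C)$, and restricting to $w=\bar z$ yields $\partial\varphi\wedge\omega\in(\varphi)$ as real analytic germs near $p$. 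Choosing holomorphic coordinates $(z_1,\dots,z_n)$ that trivialize $\F$ near $p=0$, so that $\omega$ is a unit times $dz_n$ and the plaque is $\{z_n=0\}$, this relation reads $\partial\varphi/\partial z_i=h_i\varphi$ for $i=1,\dots,n-1$ and suitable real analytic $h_i$; since $\varphi$ is real, also $\partial\varphi/\partial\bar z_i=\overline{h_i}\,\varphi$.

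Next I would restrict $\varphi$ to the plaque. Set $g(z',\bar z')=\varphi(z',0,\bar z',0)$ with $z'=(z_1,\dots,z_{n-1})$, a real analytic function satisfying $g(0)=\varphi(p)=0$ and, by the relations above, $\partial g/\partial z_i=\hat h_i\,g$ and $\partial g/\partial\bar z_i=\overline{\hat h_i}\,g$ for all $i<n$. A straightforward induction on the order of differentiation then shows that every partial derivative of $g$ vanishes at $0$: by the Leibniz rule each derivative of $g$ is a combination of strictly lower order derivatives of $g$ times real analytic factors, all vanishing at $0$ by the inductive hypothesis. As $g$ is real analytic it vanishes identically near $0$, so the plaque of $\F$ through $p$ is contained in $M$; and since $\xbar{M_{reg}}$ is closed with $p\notin\xbar{M_{reg}}$, a small enough plaque lies in $M\setminus\xbar{M_{reg}}$.

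Finally I would pass from the plaque to the whole leaf $L_p$. The set $\{q\in L_p:q\in M\}$ is closed in $L_p$ because $M$ is closed, and it is open: at a point of it lying in $\xbar{M_{reg}}$ the invariance of $\xbar{M_{reg}}$ already provides a neighborhood in $M$, while at a point in $M\setminus\xbar{M_{reg}}$ the plaque argument applies verbatim. As $L_p$ is connected, $L_p\subset M$. Moreover $L_p$ cannot meet $\xbar{M_{reg}}$, for otherwise invariance of $\xbar{M_{reg}}$ would force $L_p\subset\xbar{M_{reg}}$, contradicting $p\in L_p\setminus\xbar{M_{reg}}$; hence $L_p\subset M\setminus\xbar{M_{reg}}$. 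Since this holds for every point of $M\setminus\xbar{M_{reg}}$ regular for $\F$, the set $M\setminus\xbar{M_{reg}}$ is a union of leaves and singular points of $\F$, that is, invariant by $\F$.
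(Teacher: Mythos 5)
Your endgame is fine, but the step you yourself flagged as ``the main obstacle'' --- propagating the tangency identity across the gap --- is exactly where the argument breaks as written. You fix a minimal defining function $\varphi$ for $M$ \emph{near $p$}; but $p \notin \xbar{M_{reg}}$ means an entire neighborhood $U$ of $p$ is disjoint from $M_{reg}$, so inside the local model $M_\C \subset U \times U^{*}$ the locus $\{(z,\bar z) : z \in M_{reg}\}$ is \emph{empty}, and the identity principle on $M_\C$ has no uniqueness set to start from. Worse, at a handle point the germ of $M$ is not a hypersurface germ at all: it has real dimension at most $2n-2$ (if it had dimension $2n-1$, smooth $(2n-1)$-dimensional points of $M$ would accumulate at $p$, contradicting $p \notin \xbar{M_{reg}}$), so the minimal-defining-function machinery of \cite[Lem.~2.1]{Pinchuk_2017}, which is stated for germs of real analytic \emph{hypersurfaces}, does not apply at $p$. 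Finally, ``$M$ irreducible $\Rightarrow$ $M_\C$ irreducible'' is a statement about germs at a fixed point, whereas what you need is semi-global: a domain $U$ containing both $p$ and an open piece of $M_{reg}$, a defining function admitting a Taylor expansion on all of $U$, and the knowledge that the irreducible component of $M_\C = \{\varphi_\C = 0\}$ through $(p,\bar p)$ actually meets the maximally real locus sitting over $M_{reg} \cap U$. None of this is automatic, and it is precisely this connectivity that carries the vanishing of the tangency form across the gap. The paper takes such a $U$ with a Taylor-expanding defining function and outsources exactly this point to the proof of \cite[Lem.~7]{brunella2011}, which yields the invariance of $M_\C$ by the product foliation $\F \times \F^{*}$; you must either cite that lemma or reprove its content --- a two-line appeal to irreducibility does not do it.

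Granting the invariance of $M_\C$ near $(p,\bar p)$, the second half of your proof is correct and genuinely different from the paper's. The paper stays with the product foliation: it gets $L_p \times L_{\bar p}^{*} \subset M_\C$ and restricts to the diagonal, $\kappa(L_p) \subset M_\C \cap \Delta$, hence $L_p \subset M$. You instead convert vanishing on $M_\C$ into the divisibility $\partial\varphi \wedge \omega \in (\varphi)$ and run a Taylor-coefficient induction along the plaque $\{z_n = 0\}$ in trivializing coordinates, then globalize by an open-closed argument on the leaf. That induction is sound (for it you only need $\varphi_\C$ reduced at $(p,\bar p)$ and chosen with the reality symmetry so that its diagonal restriction is real --- fixable technical points, since full minimality in the sense of \cite{Pinchuk_2017} is unavailable at $p$), and your explicit open-closed globalization along the leaf is a step the paper leaves implicit. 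So the architecture is salvageable, but as it stands the proof has a genuine hole at its acknowledged crux.
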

\begin{proof}
The problem is local, thus we can suppose that $M$ is defined in a neighborhood $U$ of $0 \in \C^{n}$ by some real analytic function 
having a Taylor series expansion in $U$.
Let $\F^{*}$ denote the singular holomorphic foliation on $U^{*}$ defined in the following way:
$\sing(\F^{*}) = (\sing(\F))^{*} = \{ \bar{p}; p \in \sing(\F)\}$ and,
if $L_{q}$ denotes the leaf of $\F$ through $q \in U \setminus \sing(\F)$, then  $L_{\bar{q}}^{*} = \{\bar{z}; z \in L_{q}\}$ is the leaf of $\F^{*}$
through $\bar{q} \in U^{*} \setminus \sing(\F^{*})$.
We consider the complexification $M_{\C} \subset U \times U^{*} \subset \C^{n} \times \C^{n}$.
The real analytic immersion $\kappa:z \in U \mapsto (z,\bar{z}) \in U \times U^{*}$
identifies $M$ is  with the real trace
$\kappa(M) = M_{\C} \cap \Delta$, where $\Delta = \{ (z,w) \in U \times U^{*}; w = \bar{z} \}$.
Let $\F \times \F^{*}$ denote the codimension two singular holomorphic foliation on $U \times U^{*}$, whose leaf trough
$(p,\bar{q}) \in U \times U^{*}$, where $p\not\in \sing(\F)$ and $\bar{q} \not\in \sing(\F^{*})$, is $L_{p} \times L_{\bar{q}}^{*}$
(note that if $U$ is small enough and $F(z)$ is a local meromorphic first integral for $\F$  in $U$,
then $\F \times \F^{*}$ is defined
by the levels of the map with meromorphic entries $(F(z),F^{*}(w))$, where $(z,w) \in U \times U^{*}$).
$M$ being invariant by $\F$ implies that $M_{\C}$ is invariant by $\F \times \F^{*}$ (see the proof of \cite[Lem. 7]{brunella2011}).
Let $p \in M \setminus \xbar{M_{reg}}$ be a ``handle stick'' point. Suppose that $p \not\in \sing(\F)$ and
consider $(p,\bar{p}) \in M_{\R}$.
First note that $\kappa(L_{p}) = \{(z,\bar{z}); z \in L_{p}\} \subset L_{p} \times L_{\bar{p}}^{*}$.
On the other hand,
 the invariance of $M_{\C}$ by $\F \times \F^{*}$ gives  that  $L_{p} \times L_{\bar{p}}^{*} \subset M_{\C}$.
Thus
$\kappa(L_{p})  \subset M_{\C} \cap \Delta$, implying that
$L_{p} \subset M$ and proving the proposition.
\end{proof}
Transferred to $\Pe^{n}$, the above result says that, when $M$ is tangent to a globally defined holomorphic foliation $\F$, then $M \setminus M_{reg}$ is a finite union of algebraic varieties formed by invariant algebraic hypersurfaces and by components of the singular set of $\F$.

\section{Chow's Theorem}
\label{section-chow}

This section is devoted to the proof of Theorem \ref{teo-extension-ndim}. This proof, carried out in dimension two and then extended to arbitrary dimensions
by taking two dimensional linear sections, is based upon
the use of B\'ezout's Theorem for collections of irreducible algebraic curves in the projective plane $\Pe^{2}$.
If $\SC$ is  such a collection, then a point $p\in\mathbb{P}^2$ is a \emph{singularity} of $\SC$ if either
$p$ is a singularity of some curve in $\SC$ or if $p$ lies in the intersection of two different curves in $\SC$.
Denote by $\sing(\SC)$ the set of these singularities.
We call attention to the following trivial  geometric fact: suppose that $\SC$ is an infinite collection of lines in $\mathbb{P}^2$ such that
$\sing(\SC)$ is finite --- and, hence, formed by a single point $p$. Then
the elements of  $\SC$ are contained in the pencil of lines with base point $p$.
The next proposition gives a version  of this fact for families of curves of higher degrees.
For its proof, we fix homogeneous $[z_{0}:z_{1}:z_{2}]$ in $\mathbb{P}^2$ and make the convention that the same Latin capital letter denotes both a   homogeneous polynomial in $\C[z_{0},z_{1},z_{2}]$ and  the algebraic curve that it defines.

\begin{prop}
\label{prop-infinite-collection}
Let $\SC$ be an infinite collection of irreducible curves in $\mathbb{P}^2$ of the same degree $d$. Suppose that $\sing (\SC)$ is a finite set.   Then there exist
 infinitely many curves in $\SC$ that are level sets of the same
rational function $F/G$ on $\mathbb{P}^2$.
 \end{prop}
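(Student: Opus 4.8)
The plan is to fix a single curve $C_{0}\in\SC$ and study how every other member of $\SC$ restricts to it; the finiteness of $\sing(\SC)$ will force only finitely many possible ``restriction patterns'', and a pigeonhole argument will then produce the pencil. Concretely, let $\nu:\tilde{C}_{0}\to C_{0}\subset\Pe^{2}$ be the normalization of $C_{0}$, a compact Riemann surface, and set $L=\nu^{*}\OO(d)$, a line bundle of degree $\deg L=d\cdot\deg C_{0}=d^{2}$. For each $C\in\SC$ with $C\neq C_{0}$, let $F_{C}\in\C[z_{0},z_{1},z_{2}]$ be its (irreducible, degree $d$) defining polynomial and put $s_{C}:=\nu^{*}\bigl(F_{C}|_{C_{0}}\bigr)\in H^{0}(\tilde{C}_{0},L)$. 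Since $C$ and $C_{0}$ are distinct irreducible curves of the same degree, $F_{C}$ cannot vanish identically on $C_{0}$, so $s_{C}\not\equiv 0$.

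The key observation is that the zero divisor $\operatorname{div}(s_{C})$ is forced to be supported on a fixed finite set. Indeed, the support of $\operatorname{div}(s_{C})$ is $\nu^{-1}(C_{0}\cap C)$, and since $C_{0}$ and $C$ are distinct members of $\SC$, every point of $C_{0}\cap C$ is a singularity of $\SC$, hence lies in the finite set $S:=\sing(\SC)$. Therefore $\operatorname{supp}\operatorname{div}(s_{C})\subset\tilde{S}:=\nu^{-1}(S)$, which is finite. By B\'ezout's Theorem $\operatorname{div}(s_{C})$ is an effective divisor of degree $d^{2}$; being supported on the fixed finite set $\tilde{S}=\{\tilde{p}_{1},\dots,\tilde{p}_{r}\}$, it is completely determined by a vector of multiplicities $(a_{1},\dots,a_{r})$ with $a_{j}\ge 0$ and $\sum_{j}a_{j}=d^{2}$. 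There are only finitely many such vectors, so by the pigeonhole principle infinitely many curves $C\in\SC$ share one and the same divisor $D=\operatorname{div}(s_{C})$ on $\tilde{C}_{0}$.

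It remains to upgrade ``same divisor'' to ``same pencil''. I would fix one curve $C_{1}$ among the infinitely many with $\operatorname{div}(s_{C})=D$. For any other such $C$, the meromorphic function $s_{C}/s_{C_{1}}$ on the compact Riemann surface $\tilde{C}_{0}$ has neither zeros nor poles, hence is a nonzero constant $\lambda_{C}$; thus $s_{C}=\lambda_{C}s_{C_{1}}$. This means $F_{C}-\lambda_{C}F_{C_{1}}$ vanishes identically on $C_{0}$, so by the Nullstellensatz the irreducible polynomial $F_{C_{0}}$ divides it; comparing degrees (all three forms have degree $d$) gives $F_{C}-\lambda_{C}F_{C_{1}}=c_{C}\,F_{C_{0}}$ for some constant $c_{C}$. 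Hence $F_{C}=\lambda_{C}F_{C_{1}}+c_{C}F_{C_{0}}$ lies in the pencil spanned by $F_{C_{1}}$ and $F_{C_{0}}$, which are linearly independent since $C_{1}\neq C_{0}$; and as $\lambda_{C}\neq 0$, the curve $C$ is the level set $\{F_{C_{1}}/F_{C_{0}}=-c_{C}/\lambda_{C}\}$. These infinitely many curves are therefore level sets of the single rational function $F_{C_{1}}/F_{C_{0}}$, proving the proposition.

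The main obstacle — really the crux of the argument — is finding the right reduction: restricting every curve to one fixed normalized member turns an unwieldy family of plane curves into a family of sections of a single line bundle, whereupon the hypothesis ``$\sing(\SC)$ finite'' becomes the statement that all the associated zero divisors are supported on a common finite set, which is exactly what makes the pigeonhole step possible. Once this viewpoint is adopted, the only delicate point is the final passage from proportional restricted sections to a genuine linear relation among the defining forms, which rests on the two standard facts that a nowhere-vanishing meromorphic function on a compact curve is constant and that an irreducible polynomial divides any form vanishing on its zero locus.
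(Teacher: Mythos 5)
Your proof is correct, and it takes a genuinely different route at the decisive step, although the skeleton is the same as the paper's: fix one reference curve, observe that its intersections with every other member of $\SC$ are confined to the finite set $\sing(\SC)$, bound the total intersection by $d^{2}$, and pigeonhole on the resulting finite intersection data. Indeed, your pigeonhole is the paper's in disguise: the multiplicity of $\operatorname{div}(s_{C})$ at a point $\tilde{p}\in\nu^{-1}(p)$ is exactly the intersection number $(\gamma,C)_{p}$ of $C$ with the branch $\gamma$ of $C_{0}$ corresponding to $\tilde{p}$, and the paper refines $\SC$ branch by branch until all these numbers are constant, which is precisely your ``same divisor'' class obtained in one shot. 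Where you diverge is in deducing the pencil. The paper argues by contradiction: it takes two members $F,G$ of the refined family, chooses $[\alpha':\beta']$ to cancel the leading terms of $f$ and $g$ along one parametrized branch of the fixed curve $P$, so that $(P,\alpha'F+\beta'G)$ strictly exceeds the B\'ezout bound $d^{2}$, forcing $P=\alpha'F+\beta'G$; a second application then places each further member $H$ in the pencil of $F$ and $G$. You instead use that two sections of $\nu^{*}\OO(d)$ with equal divisors on the compact connected Riemann surface $\tilde{C}_{0}$ (connected because $C_{0}$ is irreducible --- worth saying explicitly) have constant ratio, which via the Nullstellensatz and a degree count yields the linear relation $F_{C}=\lambda_{C}F_{C_{1}}+c_{C}F_{C_{0}}$ directly and constructively, with the reference curve itself sitting in the pencil as the fiber over $\infty$. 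The paper's argument buys elementarity, using only local parametrizations and B\'ezout; yours buys a cleaner mechanism, with explicit coefficients and no strict-inequality trick, at the cost of the normalization and line-bundle machinery. Two small points, neither a gap: your attribution of $\deg\operatorname{div}(s_{C})=d^{2}$ to B\'ezout is really the computation $\deg\nu^{*}\OO(d)=d\cdot\deg C_{0}$ together with the fact that a nonzero section's divisor computes the bundle degree (equivalent to B\'ezout read through the normalization, so harmless); and in the last step $F_{C}-\lambda_{C}F_{C_{1}}$ could a priori vanish identically or have degree less than $d$, but vanishing forces $C=C_{1}$, which you have excluded, so the quotient by the degree-$d$ form $F_{C_{0}}$ is indeed a constant, and $c_{C}\neq 0$ for the same reason.
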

\begin{proof}
 Let $d$ be  the common degree of the curves in $\SC$.  Fix a curve in $\SC$, which is defined by an irreducible homogeneous polynomial $P \in \C[z_{0},z_{1},z_{2}]$. Take a branch --- a local irreducible component --- $\gamma_1$ of $P$ at some
  $p_{1} \in \sing(\SC)$.
 By Bézout's Theorem, the intersection number at $p_{1}$  of $\gamma_1$  with the localization at $p_{1}$ of any other curve in $\SC$ is bounded by $d^2$. Then there exists an infinite set $\SC_1\subset \SC$ such that, at $p_{1}$,
 the intersection number
 $(\gamma_1,C)_{p_{1}}$ is the same
  for all $C\in \SC_1$. Take another branch $\gamma_2$  of $P$ at some
  point of $p_{2} \in \sing(\SC)$.  As before, there is an infinite set $\SC_2 \subset \SC_1$ such that
 $(\gamma_2,C)_{p_{2}}$ is the same number for all $C \in \SC_2$.
Proceeding  in this way, after a finite number of steps
  we   find  an infinite set $\SC^{*} \subset \SC$ such that, for each branch $\gamma$ of $P$ at
 any point of $p \in \sing(\SC)$, the intersection number $(\gamma,C)_{p}$ is the same for all
 $C \in \SC^{*}$. Let  $F$ and $G$ be  irreducible homogeneous polynomials defining
 two distinct curves in $\SC^{*}$. Take any curve in  $\SC^{*}$
 different from $F$ and $G$, say defined by an irreducible homogeneous polynomial $H$.
 Then it is enough to prove
 that $H$ belongs to the pencil
 $$\alpha F+\beta G, \quad [\alpha:\beta]\in \mathbb{P}^1.$$

 Let us show first that $P$ belongs to this pencil.
 Let $\gamma$ be a branch of $P$ at a point $p\in\sing(\SC)$. Let
 $f=0$ and $g=0$ be local equations of $F$ and $G$ at
  $\left(\mathbb{P}^2,p\right) \cong (\C^{2},0)$.
 Since $(\gamma,F)_{p}=(\gamma, G)_{p}=n$, for some fixed $n \geq 0$, if $\gamma(t)$  is a parametrization of the branch $\gamma$ we have
 $$f(\gamma(t))=at^n+O(t^{n+1}) \qquad \text{and} \qquad g(\gamma(t))=bt^n+O(t^{n+1}) $$
 for some constants
 $a,b\in\mathbb{C}^*$. Hence
 $$\alpha f(\gamma(t))+\beta g(\gamma(t))=(\alpha a +\beta b) t^n +O(t^{n+1}).$$
 From this we have   that
  $$(\gamma,\alpha F+\beta G)_{p} \ge n = (\gamma,F)_{p} , \quad \forall\ [\alpha:\beta]\in \mathbb{P}^1.$$
 However,  by choosing  $[\alpha:\beta]=[-b:a]$,  we have
  $$(\gamma,\alpha F+\beta G)_{p}> n=  (\gamma,F)_{p}.$$
  Now, we fix some branch $\gamma'$ of $P$ at some point  $q \in \sing(\SC)$ and choose
  $[\alpha'\colon\beta']$ such that
\begin{equation}
\label{eq-ineq-strict}
(\gamma',\alpha' F+\beta' G)_{q}> (\gamma',F)_{q}.
\end{equation}
As remarked above,  for any other branch $\gamma$ of $P$ at $q$, we have
\begin{equation}
\label{eq-ineq}
(\gamma,\alpha' F+\beta' G)_{q}\ge (\gamma,F)_{q}.
\end{equation}
Note that  $\sing(\SC)$ contains the intersection of $P$ with any curve in  $\SC^{*}$. 
In particular, it contains  the points of intersection of $P$ and $F$.
Therefore,
denoting by $\Gamma$  the set of branches of $P$
at the points of $\sing(\SC)$, we have
\begin{align}
\label{eq-intersection-F}
   (P,F)&= \sum\limits_{\gamma\in \Gamma,\, p \in \gamma}(\gamma,F)_{p}
  =(\gamma',F)_{q}+
  \sum\limits_{\gamma\in\Gamma \backslash\{\gamma'\}, \, p \in \gamma}(\gamma,F)_{p} .
\end{align}
On the other hand
\begin{align}
  \label{eq-intersection-pencil}
  (P,\alpha' F+\beta' G)&
  \geq (\gamma',\alpha' F+\beta' G)_{q}+
  \sum\limits_{\gamma\in\Gamma\backslash\{\gamma'\},\, p \in \gamma}(\gamma,\alpha' F+\beta' G)_{p},
\end{align}
the inequality arising from the fact that we are only considering branches of $P$ at points of $\sing(\SC)$.
Therefore, taking into account \eqref{eq-ineq-strict}, \eqref{eq-ineq}, \eqref{eq-intersection-F} and \eqref{eq-intersection-pencil}, we find
  \begin{align}\label{final} (P,\alpha' F+\beta' G) > (P,F)=d^2.
  \end{align}

 Now we show that $P$ has to coincide with the curve $\alpha' F+\beta' G$. To do this
 it is enough to show that some of the branches $\gamma\in \Gamma$ belongs to $\alpha' F+\beta' G$, since $P$ is irreducible and has the same degree of $\alpha' F+\beta' G$.
If it were not the case, all   indices $(\gamma,\alpha' F+\beta' G)_{p}$, for $\gamma\in \Gamma$,
would be finite and  \eqref{final} would be  a contradiction,
 because by Bézout's Theorem we should have
  \begin{align} (P,\alpha' F+\beta'G)=d^2.
  \end{align}
 We thus obtain that
\begin{equation}
\label{eq-pencil-FG}
P=\alpha'F+\beta'G.
\end{equation}

Repeating the same reasoning  above for the pair $\{F,H\}$ in place of $\{F,G\}$,
we find $[\mu:\nu] \in \mathbb{P}^{1}$  such that
\begin{equation}
\label{eq-pencil-FH}
P=\mu F+\nu H.
\end{equation}
Note that $\nu\neq 0$ because $P\neq F$.  Therefore, solving for $H$ in   \eqref{eq-pencil-FG} and \eqref{eq-pencil-FH},  we obtain
$$H=\frac{\alpha'-\mu}{\nu}F+\frac{\beta'}{\nu}G,$$
finishing the proof of the Lemma
\end{proof}

Proposition \ref{prop-infinite-collection} allows us to prove Theorem \ref{teo-extension-ndim}:

\begin{proof} [Proof {\rm (of Theorem \ref{teo-extension-ndim})}]
We first suppose that $M \subset \Pe^{2}$. In this case, $\sing(\xbar{M_{reg}})$ is a finite set.
Let $L\subset M$ be a leaf of the Levi foliation.  By hypothesis,
$\xbar{L}$ is an irreducible algebraic curve contained in $\xbar{M_{reg}}$. If $\SC$ denotes the collection of these curves, then
 $\sing(\SC)$ is contained in the finite set $\sing(\xbar{M_{reg}})$.
Since there are uncountably many Levi leaves, by reducing  $\SC$ if needed, we
can suppose that it is an infinite family formed by    irreducible algebraic curves of the same degree.
 By Proposition \ref{prop-infinite-collection}, possibly reducing $\SC$ again, we can assume that all these curves
are levels of a rational function $F/G$.
It only remains to show that the Levi-flat hypersurface $M$ is invariant by the foliation $\G$ defined by the first integral $F/G$.
Let $T(\G,M) \subset M$ be the set of tangency points between $\G$ and $M$, i.e. the subset  of $M$ formed by the points
$p \in   \left( M \setminus  M_{reg} \right) \cup \left(\sing(\G) \cap M_{reg} \right)$
along with all points
$p \in M_{reg} \setminus \sing(\G)$ such that $T_{p} \G \subset T_{p} M$.
We have that $T(\G,M)$ is  real-analytic. Since $\SC \subset T(\G,M)$ and $\SC$ contains infinitely many distinct  curves, we must have
$\dim_{\R} T(\G,M) = 3$. On the other hand, since also $\dim_{\R} M = 3$ and $M$ is irreducible, we conclude that $T(\G,M) = M$, finishing the Theorem's proof
in the two dimensional case.

Suppose now that $M \subset \Pe^{n}$, with $n \geq 3$.
 As in the case $n=2$, we can take an infinity family $\SC$ of irreducible algebraic hypersurfaces of the same degree contained in $\xbar{M_{reg}}$.
Fix a pair of hypersurfaces of $\SC$, denoted by $F$ and $G$, the same notation of their equations in homogeneous coordinates of $\mathbb{P}^n$.
Let $H$ be an equation of
some hypersurface in $\SC$.
 We consider   two dimensional planes $\Pi  \simeq \mathbb{P}^2  \subset \mathbb{P}^n$ in \emph{general position} with $M$ and $\mathcal{A} = \{F,G,H\}$.
 By this, we mean the following:
\begin{enumerate}
\item $M' = M|_{\Pi}$ is a real analytic Levi-flat hypersurface such that $\sing(\xbar{M_{reg}'})$ has isolated singularities;
\item  $\mathcal{A}' = \{F',G',H'\}$ is a set of irreducible curves of the same degree, where $F'$ is the restriction  of $F$ to  $\Pi$, with similar definitions for $G'$ and $H'$.
\end{enumerate}
Since $\dim_{\R} \sing(\xbar{M_{reg}}) \leq 2n-4$, the generic element in the Grasmannian $G(2,n)$ satisfies  condition (1), and the same is true for condition (2).
Fix $\Pi  \simeq \mathbb{P}^2$  one such  plane.
By
Proposition \ref{prop-infinite-collection}, $H'$ is in the pencil generated by $F'$ and $G'$
(the rational function $F'/G'$ is a first integral for the foliation on $\Pi$  that extends the Levi foliation of $M'$).
Let us consider on $\mathbb{P}^n$ the pencil generated by $F$ and $G$. Since, for   generic   $\Pi$,  $H'$ is in the pencil generated by $F'$ and $G'$,  we have that $H$ is in the pencil generated by $F$ and $G$. Repeating this argument for every $H$ in $\SC$,
we conclude that all elements of $\SC$ belong to the pencil  generated by $F$ and $G$.
In order to conclude the proof, we have to show that the holomorphic foliation $\G$ given by the levels of $F/G$ extends the Levi foliation of $M$.
Indeed, as in the first part of the proof,  it is enough to consider that  $T(\G,M)$,
the set of points of tangency between $\G$ and $M$, is an analytic subset of $M$ that contains $\SC$.
Thus,  $\dim_{\R} T(\F,M) = 2n -1$ and, since
 $M$ is irreducible, we must have $T(\G,M) = M$,
completing the proof of the theorem.
\end{proof}

If $M \subset \Pe^{n}$ is a  real algebraic Levi-flat hypersurface, then all leaves of its Levi foliation are algebraic. Indeed, in the complex cone
$\C^{n} \setminus \{0\}$ of $\Pe^{n}$,  the lifting $\tilde{M}$ of $M$ is defined by an  equation of the form $\phi(z,\bar{z}) = 0$, where $\phi(z,\bar{z})$ is bihomogeneous
polynomial of  bidegree $(d,d)$ in the variables $(z,\bar{z})$, where $d>0$. If $z_{0} \in  \C^{n} \setminus \{0\}$
is a regular point of $\tilde{M}$, then the leaf of $\tilde{\LL}$ passing through $z_{0}$ is contained in the Segre variety $\Sigma_{z_{0}}$, whose equation
is $\phi(z,\bar{z}_{0}) = 0$. Since $z_{0}$ is not Segre degenerate, we conclude that $L_{z_{0}}$ is algebraic.
In \cite[Theorem 1.2 item (iii)]{jiri2012}, it is proved that if   $M \subset \Pe^{n}$ is a real algebraic  Levi-flat hypersurface with $\dim_{\R} \sing(\xbar{M_{reg}}) = 2n -4$,
then there is a complex subvariety $S \subset M$ of complex dimension $n-2$ whose points are Segre degenerate points of $M$.
As an application of Theorem \ref{teo-extension-ndim}, we have an improvement  of this result, by replacing the hypothesis on the algebraicity of $M$ to
that of its Levi leaves:

\begin{cor}
Let $M$ be a real analytic Levi-flat hypersurface  in
$\mathbb{P}^n$, $n \geq 2$, such that $\dim_{\R} \sing(\xbar{M_{reg}}) = 2n -4$.  Suppose that the Levi leaves
of $M$ are all algebraic. Then the set of Segre degenerate singularities of $M$ form a complex algebraic variety  of pure dimension   $n-2$, contained in   $\xbar{M_{reg}}$.
\end{cor}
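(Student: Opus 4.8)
The plan is to derive the corollary directly from Theorem \ref{teo-extension-ndim} together with Proposition \ref{prop-segredegenerate-codimtwo}. First I would apply Theorem \ref{teo-extension-ndim}: since $\dim_{\R}\sing(\xbar{M_{reg}}) = 2n-4$ and all Levi leaves are algebraic, its hypotheses are satisfied, so the Levi foliation $\LL$ extends to a singular holomorphic foliation $\F$ on $\Pe^{n}$ possessing a rational first integral $F/G$. In particular $M$ becomes invariant by (tangent to) $\F$, which is exactly the standing assumption of Proposition \ref{prop-segredegenerate-codimtwo}. For $n \geq 3$ the conclusion is then immediate: that proposition asserts that the Segre degenerate points of $M$ form a complex algebraic variety of pure complex dimension $n-2$ contained in $\sing(\xbar{M_{reg}})$, and since $\sing(\xbar{M_{reg}}) \subset \xbar{M_{reg}}$ this is precisely what is claimed.

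The remaining case $n=2$ must be treated by hand, and this is where the only genuine difficulty lies, since Proposition \ref{prop-segredegenerate-codimtwo} is stated only for $n \geq 3$. When $n=2$ the hypothesis reads $\dim_{\R}\sing(\xbar{M_{reg}}) = 0$, so $\sing(\xbar{M_{reg}})$ is a finite set; as every Segre degenerate point lies in $\sing(\xbar{M_{reg}})$, the Segre degenerate locus is automatically finite, hence a complex algebraic variety, and contained in $\xbar{M_{reg}}$. The substantive point---needed for the assertion of pure dimension $n-2=0$---is nonemptiness. To establish it I would use the rational first integral: after the standard reduction to a non-composite pencil (so that all but finitely many Levi leaf closures are irreducible level curves $F - cG = 0$ of the pencil generated by $F$ and $G$), two distinct such curves meet only along the base locus $B = \{F = G = 0\}$, which is nonempty by B\'ezout's theorem since $F$ and $G$ are nonconstant. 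A point $b \in B$ lies in $\xbar{M_{reg}}$, being a limit of points on a Levi leaf, and it lies in the closures of infinitely many distinct Levi leaves (all the irreducible levels pass through $B$); hence $b$ is a dicritical singularity of $M$, which by \cite[Th. 3.1]{Pinchuk_2017} is the same as a Segre degenerate point.

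Thus in the case $n=2$ the Segre degenerate locus is a nonempty finite algebraic subset of $\sing(\xbar{M_{reg}})$, i.e. a complex algebraic variety of pure dimension $0 = n-2$ contained in $\xbar{M_{reg}}$, and the corollary follows in all dimensions. The one step deserving care is precisely the passage to a non-composite pencil guaranteeing irreducibility of the generic leaf closure; this is routine but is exactly what makes the base points genuinely accumulate \emph{Levi} leaves, rather than merely level curves of the ambient foliation $\F$.
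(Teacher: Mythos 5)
Your proof is correct and follows essentially the same route as the paper: apply Theorem \ref{teo-extension-ndim} to get the extending foliation with rational first integral, quote Proposition \ref{prop-segredegenerate-codimtwo} for $n \geq 3$, and for $n = 2$ exhibit a point of the finite set $\sing(\xbar{M_{reg}})$ lying in the closure of infinitely many Levi leaves, hence dicritical and Segre degenerate by \cite[Th. 3.1]{Pinchuk_2017}. The only cosmetic difference is the $n=2$ nonemptiness step --- the paper pigeonholes infinitely many algebraic Levi leaves onto a single point of the finite singular set, while you locate the point in the base locus of the pencil $F/G$; note your non-composite reduction is actually dispensable, since any two leaf closures lying in distinct levels already meet, by B\'ezout, inside the finite base locus, so the pigeonhole argument applies directly to the base points without needing irreducibility of the generic level.
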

\begin{proof}
Applying Theorem \ref{teo-extension-ndim}, we have that $M$ is tangent to a  holomorphic foliation $\G$ on $\mathbb{P}^n$ with a rational first integral.
By \cite[Th. 3.1]{Pinchuk_2017},
Segre degenerate points are contained in the indeterminacy set of this rational function, which is an algebraic set of pure codimension two.
For  $n\geq 3$,   Proposition \ref{prop-segredegenerate-codimtwo} gives that this set has some of its  components in $M$.
For $n=2$, it is enough  to consider that
the hypothesis on the dimension of $\sing(\xbar{M_{reg}})$ gives that this  is a finite set and, since there are infinitely many algebraic Levi leaves,
  infinitely many of them accumulate to a same point $p \in \sing(\xbar{M_{reg}})$, which turns out to be a Segre degenerate singularity.
\end{proof}

\begin{example}
\label{ex-lebl}
{\rm  (J. Lebl, \cite{lebl2015})  Take $X \subset \R^{2}$ a smooth real analytic curve that is not contained
in any proper real algebraic variety of $\R^{2}$. The set
\begin{equation}
\label{eq-ex-lebl}
\tilde{M} = \{(z_{0},z_{1},z_{2}) \in \C^{3} \setminus \{0\}; \ z_{0} = x z_{1} + yz_{2} \ \ \text{with} \ \ (x,y) \in X\} \cup \Delta ,
\end{equation}
where $\Delta = \{(z_{0},z_{1},z_{2})  \in \C^{3}; z_{1} \bar{z}_{2} = \bar{z}_{1} z_{2} \}$,
is a real analytic hypersurface of $\C^{3} \setminus \{0\}$.
The fact that $X$ is non-algebraic implies that any polynomial vanishing on $\tilde{M}$ must be
identically zero.    $\tilde{M}$ is a complex cone with vertex $0 \in \C^{3}$. It is also Levi-flat, foliated by complex planes. Thus, its image
  by the canonical projection $\C^{3} \setminus \{0\} \to \Pe^{2}$ is a non-algebraic,  real analytic hypersurface, $M \subset \Pe^2$, foliated by projective lines.
A Chow type theorem, along the lines of  our main theorem, fails in this case. The singular set $\sing(\tilde{M})$ coincides with the intersection of the first
set in \eqref{eq-ex-lebl}
with $\Delta$, which is a set of real dimension four. Thus, $\dim_{\R}\sing(M)=2$ and $\sing(M)$ is non-discrete.
}\end{example}

\begin{example}
\label{ex-brunella}
{\rm  (M. Brunella, \cite{brunella2007})
Let $(z_{1},z_{2})$ be the coordinates of $\mathbb{C}^{2}$
such that $z_{1}=x+iy$, $z_{2}=s+it$ and take the  hypersurface $M$ defined by
\begin{equation}
M=\{(z_{1},z_{2})\in\mathbb{C}^{2}:t^{2}=4(y^{2}+s)y^{2}\}.
\end{equation}
Considering $(z_{1},z_{2})$ as affine coordinates of $\Pe^2$, we can view $M$ as a real algebraic  hypersurface in  $\mathbb{P}^2$. Moreover,
$M$ is Levi-flat: it is foliated by  the complex curves
\[L_c=\{z_{2}=(z_{1}+c)^2: \IM(z_{1})\neq 0\},\,\,\,\,\,c\in\mathbb{R}.\]
Furthermore,  $\sing(M)=\{t=y=0\}$, so that $\dim_{\R}\sing(M)=2$. In this case, even though $M$ is algebraic, the Levi foliation does not extend to a holomorphic foliation on the ambient
$\Pe^{2}$.  However, there exists a $2$-web $\mathcal{W}$ extending it, namely the one defined by the equation $(dw)^{2}-4w(dz)^{2}=0$
(see \cite{pereira2015}).
Once again, this example shows  how the
hypothesis on the dimension of the singular set is essential in order to achieve the conclusion of our main theorem.
}\end{example}

\section{Levi foliations}
\label{section-Levi-flat-foliations}

Let $X$ be a holomorphic manifold of dimension $n$. A  real analytic foliation $\G$  on $X$ of real codimension two  is a \emph{Levi foliation} if its leaves are
holomorphic manifolds of complex codimension one immersed in $X$.  More precisely, $\G$ is defined by
 a system of  real analytic local charts $(\phi,U)$, where $\phi:U \to \Delta$ is a real analytic diffeomorphism, $U \subset X$ being an open set and $\Delta \subset \C^{n}$, the unitary polydisc centered at the origin,
such that, for overlapping charts $(\phi_{\alpha},U_{\alpha}), (\phi_{\beta},U_{\beta})$, the real analytic transition functions $\phi_{\alpha \beta} = \phi_{\beta} \circ \phi_{\alpha}^{-1}$ have the form $\phi_{\alpha \beta}(Z',z_{n}) = (f_{\alpha \beta}(Z',z_{n}), g_{\alpha \beta}(z_{n})) \in \C^{n-1} \times \C$, with
 $f_{\alpha \beta}$
holomorphic in the variables $Z' = (z_{1},\ldots,z_{n-1}) \in \C^{n-1}$. In this way, for each chart $(\phi,U)$ and  $c \in \mathbb{D}$, the sets $\Delta \cap \{z_{n}=c \}$
are plates that glue together  forming the leaves of $\G$.

Locally on $X$, the distribution of complex hyperplanes in the holomorphic cotangent bundle $T^{*}X$ induced by $\G$ is defined by a non-singular real-analytic section, that is, by a  real analytic $1$-form of type $(1,0)$ without zeroes. For instance, in the chart $(\phi,U)$, we can take $\eta = \phi^{*} dz_{n}$.
For overlapping charts $(\phi_{\alpha},U_{\alpha}), (\phi_{\beta},U_{\beta})$, the corresponding $1$-forms $\eta_{\alpha}$ and $\eta_{\beta}$ are such that
$\eta_{\alpha} = \lambda_{\alpha \beta} \eta_{\beta}$ for some real analytic function $\lambda_{\alpha \beta}: U_{\alpha} \cap U_{\beta} \to \C^{*}$.

We say that $\G$ is  a \emph{Levi foliation with singularities}  on $X$  if there is a covering $\{U_{\alpha}\}$ of $X$ by connected open sets and, for each $\alpha$,
there is a non-zero real analytic $1$-form $\eta_{\alpha}$ of type $(1,0)$ that defines a (regular) Levi-flat foliation on $U_{\alpha} \setminus \sing(\eta_{\alpha})$ and such that, whenever $U_{\alpha} \cap U_{\beta} \neq \emptyset$, there exists a real analytic $\lambda_{\alpha \beta}: U_{\alpha} \cap U_{\beta} \to \C^{*}$ such that
$\eta_{\alpha} = \lambda_{\alpha \beta} \eta_{\beta}$ in $U_{\alpha} \cap U_{\beta}$.
The singular set of $\G$ is the real analytic set $\sing(\G) \subset X$  defined by $\sing(\G)|_{U_{\alpha}} = \sing(\eta_{\alpha})$.

We provide the following consequence of Proposition \ref{prop-infinite-collection}, that assures the existence of
semialgebraic Levi-flat hypersurfaces tangent to real analytic Levi foliations in the complex projective plane  $\Pe^{2}$:

\begin{prop}
\label{prop-invariant-surface}
 Let $\G$ be a real analytic Levi foliation on $\Pe^{2}$ with isolated singularities. If it
  contains uncountably many algebraic leaves, then there exists a semialgebraic hypersurface invariant by $\G$.
\end{prop}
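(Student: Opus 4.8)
The plan is to mimic the two-dimensional case of Theorem \ref{teo-extension-ndim}: produce the hypersurface as the closure of a suitable one-parameter family of algebraic leaves, and then quote that theorem to obtain semialgebraicity.

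First I would gather the algebraic leaves of $\G$ into the collection $\SC$ of their (algebraic) closures. Since $\G$ has isolated singularities and its leaves are smooth complex curves through the regular points of $\G$, each member of $\SC$ is smooth at the regular points of $\G$ and distinct members are disjoint there; hence $\sing(\SC) \subset \sing(\G)$, a finite set. As there are uncountably many algebraic leaves and only countably many possible degrees, uncountably many share a common degree $d$. Running the argument of Proposition \ref{prop-infinite-collection}, but retaining at each pigeonhole step an uncountable (rather than merely infinite) subfamily, I would obtain uncountably many of these leaves inside a single pencil; write $\pi = F/G$ for the corresponding rational map $\Pe^{2} \dashrightarrow \Pe^{1}$, let $B = \{F = G = 0\} \subset \sing(\G)$ be its finite base locus, and let $V \subset \Pe^{1}$ be the (uncountable) set of parameters $c$ for which the fibre $C_{c} = \pi^{-1}(c)$ is a leaf of $\G$.

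Next I would set $N = \overline{\bigcup_{c \in V} C_{c}} = \pi^{-1}(\overline{V})$. Being the closure of a union of leaves of the real analytic foliation $\G$, the set $N$ is invariant by $\G$ (a regular point of $\G$ lying in $N$ carries a whole plate, by the local product structure), so away from the finite set $\sing(\G)$ it is a Levi-flat hypersurface whose Levi foliation is $\G|_{N}$ and whose Levi leaves are the algebraic curves $C_{c}$. Granting that $N$ is a genuine real analytic Levi-flat hypersurface with $\dim_{\R} \sing(\overline{N_{reg}}) = 0$, Theorem \ref{teo-extension-ndim} in dimension two applies: $N$ has all Levi leaves algebraic and finite singular set, hence $N$ is semialgebraic and its Levi foliation extends to the holomorphic foliation on $\Pe^{2}$ with rational first integral $\pi$. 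This $N$ is the required semialgebraic hypersurface invariant by $\G$.

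The main obstacle is exactly the step I granted: showing that $V$ is one real-dimensional, so that $N$ is a hypersurface rather than a lower dimensional set (a priori an uncountable $V$ could be totally disconnected). I would control this through the tangency between $\G$ and the pencil foliation $\F$ defined by $\pi$: writing $\G$ locally by a real analytic $(1,0)$-form $\eta$ and $\F$ by the polynomial form $\omega = G\,dF - F\,dG$, the coefficient $\Phi$ of $\eta \wedge \omega$ is real analytic, vanishes precisely on the tangency locus, and $C_{c}$ is a leaf of $\G$ exactly when $\Phi \equiv 0$ on $C_{c}$; thus $V = \{c : C_{c} \subset \{\Phi = 0\}\}$. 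If $\Phi \not\equiv 0$, then $\{\Phi = 0\}$ is a real analytic hypersurface, and its maximal $\F$-saturated real analytic subset projects under $\pi$ to a real analytic subset of $\Pe^{1}$ containing $V$; an uncountable real analytic subset of $\Pe^{1}$ has real dimension one, giving that $\overline{V}$ is a real analytic curve and $N$ a hypersurface. The delicate points to verify are the real-analyticity of this saturated locus and the control of $N$ near the finitely many points of $\sing(\G)$, ensuring $\dim_{\R}\sing(\overline{N_{reg}}) = 0$ so that Theorem \ref{teo-extension-ndim} applies. (The remaining case $\Phi \equiv 0$ means $\G = \F$, and then any real algebraic arc in $\Pe^{1}$ pulls back to a semialgebraic invariant hypersurface, for instance $\{\mathrm{Im}(F/G) = 0\}$.)
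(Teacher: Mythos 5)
Your skeleton (same-degree subfamily, Proposition \ref{prop-infinite-collection}, the tangency function $\Phi$ between $\G$ and the pencil foliation $\mathcal{R}$ defined by $F/G$) matches the paper's, but the two steps you yourself flag as ``delicate'' are genuine gaps, and the second one can actually fail. To finish through Theorem \ref{teo-extension-ndim} you must verify that $\sing(\xbar{N_{reg}})$ is finite, and there is no reason for this: if the transverse trace $\overline{V}\subset\Pe^{1}$ has a branch or singular point $c_{0}$ (a node of a real analytic curve, say), then $N=\pi^{-1}(\overline{V})$ is singular along the whole fiber $C_{c_{0}}$ away from the base locus, so $\sing(\xbar{N_{reg}})$ has real dimension $2$ and Theorem \ref{teo-extension-ndim} does not apply; worse, if $\overline{V}$ were only a closed sub-arc with endpoints of an analytic curve, $\pi^{-1}(\overline{V})$ would not even be a real analytic set. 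The paper avoids all of this by not invoking Theorem \ref{teo-extension-ndim} at all: it concludes with Lebl's theorem \cite[Th. 6.6]{jiri2012}, which asserts that a real analytic Levi-flat hypersurface in $\Pe^{2}$ with infinitely many algebraic leaves is semialgebraic, with no hypothesis whatsoever on its singular set. Substituting that citation for your final step is the necessary repair.

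The other gap --- real-analyticity of the ``maximal $\F$-saturated real analytic subset'' of $\{\Phi=0\}$ and of its projection to $\Pe^{1}$ --- is also unproven (saturations of analytic sets by foliations, and images of analytic sets under maps, need not be real analytic), but it is unnecessary. Your $\Phi$ is locally an equation of the global tangency set $T(\G,\mathcal{R})$, which is a well-defined real analytic subset of $\Pe^{2}$ because the local $(1,0)$-forms defining $\G$ differ by nonvanishing factors. This set contains every curve $C_{c}$, $c\in V$, and a proper real analytic subset of $\Pe^{2}$ of dimension at most $2$ contains only finitely many distinct algebraic curves (it has locally finitely many two-dimensional components, hence finitely many by compactness). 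So either $T(\G,\mathcal{R})=\Pe^{2}$ --- your case $\Phi\equiv 0$, where $\{\IM(F/G)=0\}$ works as you say --- or $T(\G,\mathcal{R})$ is a real analytic hypersurface, Levi-flat and containing infinitely many algebraic leaves of $\G$, to which Lebl's theorem applies directly. This is exactly the paper's proof; note that it needs only countably infinitely many leaves of a single degree, so your uncountable refinement of the pigeonholing, and the entire analysis of the structure of $\overline{V}$, are superfluous once the tangency locus itself is taken as the hypersurface.
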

\begin{proof}
Within the family of  algebraic leaves of $\G$, there exists a subfamily $\SC$ formed by infinitely many algebraic curves having the same degree.
The singular set $\sing(\SC)$ is contained in $\sing(\G)$, hence it is discrete.
By Proposition \ref{prop-infinite-collection}, infinitely many curves of $\SC$ are contained in the levels of a rational function $F/G$,
where $F,G \in \C[z_{0},z_{1},z_{2}]$. Denote by $\mathcal{R}$ the foliation defined by the levels of  $F/G$ and by
$T(\G,\mathcal{R})$   the set of tangencies of $\G$ and $\mathcal{R}$.  Evidently, $T(\G,\mathcal{R})$  is a real analytic set that contains
 $\SC$. Then, either $T(\G,\mathcal{R})$ is the whole $\Pe^{2}$  or it is a real analytic hypersurface $H$.  In the first case, $\G$ and $\mathcal{R}$ coincide and, evidently, there are plenty of real algebraic hypersurfaces invariant by $\G$. In the second case, $H$ is a real analytic Levi-flat hypersurface with infinitely many
algebraic leaves. By \cite[Th. 6.6]{jiri2012} it is semialgebraic.
\end{proof}

We close this article giving a version of Theorem  \ref{teo-extension-ndim} for singular Levi-foliations in $\Pe^{n}$:
\begin{teo}
\label{teo-levi-foliation}
 Let $\G$ be a real analytic Levi foliation on $\Pe^{n}$, $n \geq 2$,
 such that $\sing(\G)$ has real dimension at most $2n-4$. If
all the  leaves $\G$ are algebraic, then  it has a rational first integral.
\end{teo}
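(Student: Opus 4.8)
The plan is to reduce to the planar case $n=2$ and there imitate the proof of Theorem \ref{teo-extension-ndim}, the essential new difficulty being that one must recover the \emph{whole} foliation from a rational first integral, and not merely produce an invariant hypersurface as in Proposition \ref{prop-invariant-surface}. First suppose $\G$ is a Levi foliation on $\Pe^{2}$ with $\dim_{\R}\sing(\G)\le 0$, so that $\sing(\G)$ is finite. The leaves of $\G$ form a two-real-parameter family, hence there are uncountably many of them, and by hypothesis each leaf is an algebraic curve whose singularities and mutual intersections are confined to the finite set $\sing(\G)$. Grouping the leaves by degree, I obtain for the generic degree $d$ an uncountable subfamily $\SC$ of irreducible curves of degree $d$ with $\sing(\SC)\subset\sing(\G)$ finite. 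Proposition \ref{prop-infinite-collection} then yields a pencil, generated by two homogeneous polynomials $F,G$, containing infinitely many curves of $\SC$; write $\mathcal{R}$ for the holomorphic foliation defined by the rational first integral $F/G$.

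The crux, and the step I expect to be the main obstacle, is to upgrade this to the equality $\G=\mathcal{R}$ (equivalently, that $F/G$ is a first integral of $\G$), rather than stopping at the invariant hypersurface of Proposition \ref{prop-invariant-surface}. This is precisely where the algebraicity of \emph{all} leaves, and not only of those in the common subfamily, must enter. I would argue that every degree-$d$ leaf lies in the pencil $\langle F,G\rangle$: if some leaf $C$ of degree $d$ were not in this pencil, then $F/G$ would be non-constant on $C$ with cofinite image in $\Pe^{1}$, so $C$ would meet infinitely many of the (infinitely many) pencil members that are leaves of $\G$; since distinct leaves meet only within the finite set $\sing(\G)$, while away from the base locus a point lies on a given pencil member for a single value of $F/G$, this would force infinitely many distinct points of $\sing(\G)$, which is absurd. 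Hence all generic-degree leaves lie in $\langle F,G\rangle$; as $d$ is the generic degree, through a generic point of $\Pe^{2}$ passes such a leaf, so $\{F/G=c\}$ is a leaf of $\G$ for a cofinite set of values $c\in\Pe^{1}$. The tangency set $T(\G,\mathcal{R})$, being real-analytic and containing this dense family of common leaves, must then be all of $\Pe^{2}$; since $\G$ and $\mathcal{R}$ are both foliations by complex curves, this gives $\G=\mathcal{R}$ and exhibits $F/G$ as a rational first integral.

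Finally, for $n\ge 3$ I would descend to the planar case exactly as in the proof of Theorem \ref{teo-extension-ndim}. Using $\dim_{\R}\sing(\G)\le 2n-4$, a generic two-dimensional linear plane $\Pi\cong\Pe^{2}$ cuts $\G$ in a real-analytic Levi foliation $\G|_{\Pi}$ with isolated singularities and algebraic leaves (the traces of the leaves of $\G$). Fixing two leaves $F,G$ and an arbitrary leaf $H$ of $\G$, the planar case applied on a generic $\Pi$ shows that $H|_{\Pi}$ belongs to the pencil generated by $F|_{\Pi}$ and $G|_{\Pi}$; letting $\Pi$ vary over the Grassmannian then forces $H$ into the pencil $\langle F,G\rangle$ on $\Pe^{n}$. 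Thus every leaf of $\G$ lies in $\langle F,G\rangle$, and the same tangency argument as above gives $T(\G,\mathcal{R})=\Pe^{n}$, hence $\G=\mathcal{R}$ and $F/G$ is the desired rational first integral. The delicate points throughout are the genericity of the sections, which is controlled by the dimension bound on $\sing(\G)$, and, above all, the planar passage from ``infinitely many leaves in a pencil'' to ``$\G$ equals the pencil foliation,'' which is exactly the feature distinguishing this statement from Proposition \ref{prop-invariant-surface}.
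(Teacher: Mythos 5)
Your planar core takes a genuinely different route from the paper's, and it contains one step that fails as written. The paper never tries to force all leaves into a single pencil: it partitions the degree-$d$ leaves into countably many classes $\SC_{d,\iota}$ according to the multi-index of intersection numbers with the branches of a fixed leaf $P$ at the points of $\sing(\SC)$, observes (via the proof of Proposition \ref{prop-infinite-collection} together with Proposition \ref{prop-invariant-surface}) that each infinite class lies in a pencil whose tangency set with $\G$ is either all of $\Pe^{2}$ (in which case we are done) or a semialgebraic Levi-flat hypersurface $M_{\iota}$, and then reaches a contradiction by Lebesgue measure: the union of all leaves is $\Pe^{2}$ minus at most finitely many points, yet it would be covered by countably many measure-zero sets (the $M_{\iota}$ and the finite unions of curves $N_{\iota}$, over all degrees $d$ and multi-indices $\iota$). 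You instead fix one pencil $\langle F,G\rangle$ and argue by intersection counting that a leaf outside it would meet infinitely many pencil-member leaves at distinct points off the base locus, forcing infinitely many points into the finite set $\sing(\G)$. That counting argument is correct (distinct leaf closures can only meet inside $\sing(\G)$, since an algebraic curve cannot contain infinitely many plaques accumulating at a regular point of $\G$), and it is a genuine shortcut the paper does not use.

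The gap is the sentence ``as $d$ is the generic degree, through a generic point of $\Pe^{2}$ passes such a leaf, so $\{F/G=c\}$ is a leaf of $\G$ for a cofinite set of values $c$.'' Nothing guarantees a single distinguished degree: several degree classes may be uncountable, and a priori the union of the degree-$d$ leaves could be a thin subset of $\Pe^{2}$ --- this indeterminacy is exactly what the paper's measure-theoretic covering argument is built to handle. Moreover, even granting that all degree-$d$ leaves lie in the pencil, their value set in $\Pe^{1}$ is merely uncountable, not cofinite, and an uncountable family of levels can sit inside a proper Levi-flat hypersurface (e.g.\ $\{|F/G|=1\}$ contains the levels over the whole unit circle), so the tangency set $T(\G,\mathcal{R})$ is not forced to be all of $\Pe^{2}$ at this point. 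Fortunately, your own intersection argument repairs the gap, because it nowhere uses $\deg C=d$: for an arbitrary leaf closure $C$, either $F/G$ is constant on $C$, so $C$ is contained in a level, or the map induced by $F/G$ on the normalization $\tilde{C}\to\Pe^{1}$ is a surjective finite morphism with only finitely many points over the base locus $\{F=G=0\}$, so $C$ meets infinitely many of the pencil-member leaves at pairwise distinct points off the base locus, each lying in $\sing(\G)$ --- absurd. Hence \emph{every} leaf of $\G$, of any degree, is contained in a level of $F/G$; since every point of $\Pe^{2}\setminus\sing(\G)$ lies on a leaf, $T(\G,\mathcal{R})$ contains a dense subset of $\Pe^{2}$ and, being closed, equals $\Pe^{2}$, so $\G=\mathcal{R}$. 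With this correction your argument is complete and in fact bypasses the paper's measure argument entirely; your reduction for $n\geq 3$ by generic two-dimensional linear sections is the same as the paper's, which leaves those details to the reader.
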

\begin{proof}
Let us first prove the result for $n=2$. In this case, $\G$ has isolated singularities and all its leaves are algebraic curves.
 Denote   by $\SC$ the family of algebraic leaves of $\G$  and by $\SC_{d}$ the subfamily of those of degree  $d$,  for $d \geq 1$.
Let us suppose, for the moment, that $d$ is fixed.  We fix a curve in $\SC_{d}$, denoted by its
irreducible homogeneous equation  $P \in \C[z_{0},z_{1},z_{2}]$. Let $\gamma_{1},\ldots, \gamma_{\ell}$ denote the family of local branches of $P$ at points
of $\sing(\SC) \subset \sing(\G)$, say $\gamma_{i}$ is a local branch at $p_{i} \in \sing(\SC)$.
It follows from the proof of Proposition \ref{prop-infinite-collection} that,
 if $\SC_{d}^{*} \subset \SC_{d}$ is an infinite family of curves such that $(\gamma_{i},C)_{p_{i}}$ is the same for every $i=1,\ldots,\ell$ and every $C \in \SC_{d}^{*}$, then all curves of $\SC_{d}^{*}$ are contained in a pencil, say
the one defined by the rational function $F/G$, where $F,G \in \C[z_{0},z_{1},z_{2}]$ are homogeneous equations of two elements in $\SC_{d}^{*}$, which defines
a holomorphic foliation $\mathcal{R}$ on $\Pe^{2}$. Thus,
as we have seen in the proof of Proposition \ref{prop-invariant-surface}, either the tangency set
$T(\G,\mathcal{R})$ is the whole $\Pe^{2}$  or it is a semialgebraic  hypersurface $M$.
Let us consider $\ell$-uples $\iota \in \mathbb{Z}_{\geq 0}^{\ell}$ and, for
  $C \in \SC_{d}$, with $C \neq P$, define $\iota(C) =  \left( (\gamma_{1},C)_{p_{1}},\ldots, (\gamma_{\ell},C)_{p_{\ell}} \right)$.
For a fixed $\iota  \in \mathbb{Z}_{\geq 0}^{\ell}$,
let us denote  $\SC_{d,\iota} = \{ C \in \SC_{d}; \iota(C) = \iota \}$.
Denote by $\Upsilon_{\infty}$ the set of multi-indices $\iota$ for which $\SC_{d,\iota}$ is infinite and by $\Upsilon_{0}$ its
complementary set in $\mathbb{Z}_{\geq 0}^{\ell}$.
In order to get a contradiction, following our previous discussion,  we suppose that, whenever $\iota \in \Upsilon_{\infty}$,  all
 curves in $\SC_{d,\iota}$ are contained in a semialgebraic Levi-flat hypersurface $M_{\iota}$ (and that this holds for every degree $d \geq 1$).
For $\iota \in \Upsilon_{0}$, let us denote by $N_{\iota}$ the finite union of curves in  $\SC_{d,\iota}$.
Denoting $\cl{A}_{d} = \cup_{C \in \SC_{d}}$ and $\cl{A}_{d,\iota} = \cup_{C \in \SC_{d,\iota}}$, we have
\[\cl{A}_{d} = \bigcup_{\iota  \in \mathbb{Z}_{\geq 0}^{\ell}} \cl{A}_{d,\iota} =   \left( \bigcup_{\iota \in \Upsilon_{\infty}} M_{\iota} \right) \bigcup \left(   \bigcup_{\iota \in \Upsilon_{0}} N_{\iota} \right).\]
Since each $M_{\iota}$ and each  $N_{\iota}$ have zero Lebesgue measure in $\Pe^{2}$ and the relation above involves countable unions, we
have that $\cl{A}_{d}$ has zero  measure in $\Pe^{2}$.
We finish by noting that, on the one hand $\cup_{d \geq 1} \cl{A}_{d}$ equals $\Pe^2$ (perhaps minus a finite number of points in $\sing(\G)$).
On the other hand, it is a countable union of sets of Lebesgue measure zero. This gives a contradiction that proves the theorem in dimension two.

The passage from dimension two to an arbitrary dimension $n>2$ is done by means of two dimensional linear sections, employng the
same arguments of the proof of Theorem \ref{teo-extension-ndim}. We leave the details for the reader.
\end{proof}

The following example illustrates Theorem \ref{teo-levi-foliation}:
\begin{example}
{\rm (J-M. Lion, cited in \cite[Sec.2]{cerveau2004})
Let $A:\C \to \C$ be an $\R$-linear isomorphism that is not $\C$-linear.
For each $z \in \C$, we take the line joining the points $(0,z)$ and $(1,Az)$.
This produces a distribution of  complex lines in $\C^{2}$, which defines a real analytic Levi foliation $\G$ on $\Pe^{2}$.
This foliation is not holomorphic, since the holonomy map between the sections $\Sigma_{0}: z_{1} = 0$ and $\Sigma_{1}: z_{1} = 1$
is not holomorphic.

The line $L_{z}$ of $\G$ passing through $(0,z) \in \Sigma_{0}$ is parameterized as
\[ (0,z) + t(1,A(z) - z) = (t, z + t(A - I)(z)), \ \ \ t \in \C,\]
where $I$ denotes the identity map in $\C$.
Suppose that, for distinct $z,\tilde{z} \in \C$,  there is a point of intersection of $L_{z }$ and $L_{\tilde{z} }$.
We find this point
 by
solving the following equation for $t , \tilde{t} \in \C$:
\[ (t , z  + t (A - I)(z )) = (\tilde{t} ,\tilde{z} + \tilde{t}(A - I)(\tilde{z})) .\]
Thus, $t  = \tilde{t}$ and
\[ z  + t(A - I)(z) =  \tilde{z}  + t(A - I)(\tilde{z}) ,\]
which gives
\[ t(A - I)(\tilde{z}-z)  = -(\tilde{z}-z) \Rightarrow  (A - I)(\tilde{z}-z)  = -\frac{1}{t}(\tilde{z}-z). \]
That is, $w = \tilde{z}-z$ must be an eigenvector of $A-I$ of eigenvalue $\lambda = -1/t \neq 0$.
Thus, for every $\alpha  \in \R \setminus \{0\}$, $L_{z }$ and $L_{z + \alpha w}$ have a common point of intersection
on the line $z_{1} = -1/\lambda$. This point belongs to $\sing(\G)$.
Each   real  line  on the complex line $\Sigma_{0}$ parallel to the eigenvector $w$ determines different point of $\sing(\G)$ on the line $z_{1} = -1/\lambda$.
Thus, we see that $\sing(\G)$ is not discrete (this too shows that $\G$ is not a singular holomorphic foliation).
We thus have an example where the conclusion of  Theorem  \ref{teo-levi-foliation} fails when the hypothesis on the
dimension of the singular set is dropped.

}\end{example}

\bibliographystyle{plain}
\bibliography{referencias}

%
%
%
%
%
%

\end{document}